\theoremstyle{definition}
\newtheorem{theorem}{Theorem}
\numberwithin{theorem}{section}
\newtheorem{proposition}[theorem]{Proposition}
\newtheorem{lemma}[theorem]{Lemma}
\newtheorem{remark}[theorem]{Remark}
\newtheorem{cor}[theorem]{Corollary}
\DeclareMathOperator{\ord}{ord}
\DeclareMathOperator{\grad}{grad}
\DeclareMathOperator{\spanned}{span}
\newcommand{\la}{\lambda}
\title{FFLV-type monomial bases for type $B$}
\author{I. Makhlin}
\address{Igor Makhlin:\newline
Skolkovo Institute of Science and Technology, Nobelya Ulitsa 3, Moscow 121205, Russia
\newline
{\it and }\newline
National Research University Higher School of Economics, 
International Laboratory of Representation Theory and Mathematical Physics,
Usacheva str. 6, 119048, Moscow, Russia}
\email{imakhlin@mail.ru}
\date{}
\begin{document}
\begin{abstract}
We present a combinatorial monomial basis (or, more precisely, a family of monomial bases) in every finite-dimensional irreducible $\mathfrak{so}_{2n+1}$-module. These bases are in many ways similar to the FFLV bases for types $A$ and $C$. They are also defined combinatorially via sums over Dyck paths in certain triangular grids. Our sums, however, involve weights depending on the length of the corresponding root. Accordingly, our bases also induce bases in certain degenerations of the modules but these degenerations are obtained not from the filtration by PBW degree but by a weighted version thereof.
\end{abstract}

\maketitle

\section*{Introduction}

In the papers~\cite{FFL1} and~\cite{FFL2} Feigin, Fourier and Littelmann constructed certain monomial bases in the finite-dimensional irreducible representations of, respectively, type $A$ and type $C$ simple Lie algebras. These bases came to be known as the {\it FFLV bases}, with ``FFL'' being the initials of the three authors and the ``V'' standing for Vinberg, who was the first to conjecture the result for type $A$ in~\cite{V}. 

Here we use the word ``monomial'' to denote the fact that each of the basis vectors is obtained from the highest vector by the action of a monomial in the root vectors. The degrees of these monomials are given by integer points in certain polytopes (FFLV polytopes). Thus these bases comprise a fascinating and relatively new family of combinatorial bases entirely different from the classic Gelfand-Tsetlin bases~(\cite{Mo}). FFLV bases serve as a key component of the growing theory of PBW degenerations. This theory reaches into various aspects of representation theory (\cite{Fe1}, \cite{FFL1}, \cite{FFL2}, \cite{CF}, ...), algebraic geometry (\cite{Fe2}, \cite{CFR}, \cite{H}, \cite{Ki}, ...) and combinatorics (\cite{ABS}, \cite{K}, \cite{Fo}, \cite{FM}, ...). 

That being said, versions of FFLV bases for the remaining (i.e. orthogonal) classical Lie algebras have yet to be constructed. In this paper we offer a possible solution for type $B$. (We point out that constructions for certain special cases in type $B$ can be found in~\cite{BK}. Those constructions are not a special case of the ones presented here.)

Parallels between the bases constructed in this paper and FFLV bases for types $A$ and $C$ can be drawn on two levels: combinatorial and algebraic. 

The combinatorial definition of our bases is remarkably similar to that of FFLV bases: the roots of the type $B$ root system are arranged into a triangular grid and the degrees of the monomials defining our bases are obtained by limiting sums over ``Dyck paths'' in the grid. A key difference is that one computes these sums with weights depending on the length of the root (i.e. short roots have a weight of $\frac12$) which is not the case for type $C$. It is also worth mentioning that, unlike both types $A$ and $C$, the way in which we arrange the roots differs slightly from the Hasse diagram of their standard ordering.

On the algebraic level our bases induce bases in certain associated graded spaces (degenerations) of the representation, as is the case for FFLV bases. These degenerations are again defined by a filtration which is obtained by computing certain degrees for every PBW monomial. However, the degree we consider here is not the regular PBW degree but a weighted modification thereof. Short roots contribute a summand of $\frac12$ to the degree which is seen to reflect the above difference on the combinatorial level. We point out that the associated graded algebra for such a filtration is not commutative unlike the standard PBW filtration. Therefore, we may not assume that a basis is obtained regardless of the order in which the root vectors in every monomial are found and, in fact, not all orders provide a basis (see also Remark~\ref{remAC}).

\section{Definitions and the main result}\label{defsec}

Consider the complex Lie algebra $\mathfrak{g}=\mathfrak{so}_{2n+1}$. Fix a Cartan decomposition $\mathfrak g=\mathfrak n^-\oplus\mathfrak h\oplus\mathfrak n^+$. We choose a basis $\beta_1,\ldots,\beta_n$ in $\mathfrak h^*$ such that the set $\Phi^+$ of positive roots consists of the vectors $\beta_i-\beta_j$ for $1\le i<j\le n$, $\beta_i+\beta_j$ for $1\le i<j\le n$ and $\beta_i$ for $1\le i\le n$. The basis $(\beta_i)$ is orthonormal with respect to the (dual of the) Killing form. The simple roots are then the vectors $\alpha_i=\beta_i-\beta_{i+1}$ for $1\le i\le n-1$ together with $\alpha_n=\beta_n$. The fundamental weights are the vectors $\omega_i=\beta_1+\ldots+\beta_i$ for $1\le i\le n-1$ together with $\omega_n=\frac12(\beta_1+\ldots+\beta_n)$. This information concerning root systems of type $B$ can be found, for instance, in~\cite{carter}.

Fix a dominant integral weight $\lambda\in\mathfrak h^*$. Let $\lambda$ have coordinates $(a_1,\ldots,a_n)$ with respect to the basis of fundamental weights (the $a_i$ being arbitrary nonnegative integers) and coordinates $(\la_1,\ldots,\la_n)$ with respect to the basis $(\beta_i)$. We then have the relations $$\lambda_i=\sum_{j=i}^{n-1}a_j+\frac{a_n}2,$$ wherefrom we see that the coordinates $(\la_i)$ comprise a non-increasing sequence of nonnegative half-integers, pairwise congruent modulo 1.

We now move on to define the combinatorial set $\Pi_\lambda$ which parametrizes our basis (or, rather, each of a family of bases) in the irreducible representation $L_\la$ with highest weight $\la$. Each element of $\Pi_\la$ is a number triangle consisting of $n^2$ nonnegative integers  $T_{i,j}$ with $1\le i<j \le 2n+1-i$. We visualize these triangles with $T_{i,j}$ and $T_{i+1,j+1}$ being, respectively, the upper-left and the upper-right neighbors of $T_{i,j+1}$, e.g. for $n=3$ we have:
\begin{center}
\begin{tabular}{ccccc}
$T_{1,2}$ && $T_{2,3}$ && $T_{3,4}$\\
&$T_{1,3}$ && $T_{2,4}$ & \\
&& $T_{1,4}$ && $T_{2,5}$ \\
&&& $T_{1,5}$ &\\
&&&& $T_{1,6}$
\end{tabular}
\end{center}
\vspace{2ex}

To specify when $T\in\Pi_\lambda$ the notion of a {\it Dyck path} is used. We call a sequence $$d=((i_1,j_1),\ldots,(i_N,j_N))$$ of pairs $1\le i<j \le 2n+1-i$ a Dyck path if we have $j_1-i_1=1$, the element $(i_{k+1},j_{k+1})$ is either $(i_k+1,j)$ or $(i_k,j_k+1)$ for all $1\le k\le N-1$ and, lastly, either $j_N-i_N=1$ or $i_N+j_N=2n+1$. In terms of the above visualization this means that the path starts in the top horizontal row, every element of the path is either the bottom-right or the upper-right neighbor of the previous one and that the path ends either in the top row or in the rightmost vertical column.

For a triangle $T=(T_{i,j}, 1\le i<j \le 2n+1-i)$ and a Dyck path $$d=((i_1,j_1),\ldots,(i_N,j_N))$$ we denote 
$$S(T,d)=
\begin{cases}
\sum\limits_{(i,j)\in d} T_{i,j}&\text{ if }i_N+j_N<2n+1,\\
\sum\limits_{l=1}^{N-1} T_{i,j}+\frac{T_{i_N,j_N}}2&\text{ if }i_N+j_N=2n+1.
\end{cases}
$$ 
In the above dichotomy we distinguish between the paths ending in the top row but not the rightmost column and those that do end in the rightmost column.
Next, we define
$$M(\lambda,d)=
\begin{cases}
\la_{i_1}-\la_{j_N}&\text{ if }i_N+j_N<2n+1,\\
\la_{i_1}&\text{ if }i_N+j_N=2n+1.
\end{cases}
$$ 
We now define $\Pi_\lambda$ by saying that $T\in\Pi_\lambda$ if and only if all $T_{i,j}$ are nonnegative integers and for any Dyck path $d$ we have $S(T,d)\le M(\lambda,d)$. 

Next, let us establish a one-to-one correspondence between the elements of a triangle $T\in\Pi_\la$ and the positive $\mathfrak g$-roots. Namely, let the root $\alpha_{i,j}$ corresponding to $T_{i,j}$ be the root $\beta_i-\beta_j$ when $j\le n$, the root $\beta_i+\beta_{2n+1-j}$ when $n<j<2n+1-i$ and the short root $\beta_i$ when $i+j=2n+1$. 

(To elaborate on the remark in the introduction we point out that in the Hasse diagram of the standard order on the set of positive roots we would have the short roots positioned on the ``diagonal'' of our triangle, i.e. in the positions $(i,n+1)$, instead of the rightmost vertical column.)

For every pair $1\le i<j\le 2n+1-i$ fix a nonzero element $f_{i,j}\in\mathfrak n^-$ in the root space of $-\alpha_{i,j}$. We assume our choice of the $f_{i,j}$ to be standard in the sense that the Serre relations are satisfied. In what follows we will often write $\overline j$ for $2n+1-j$ to simplify our notations. This means that for every pair $1\le i<j\le n$ we have the positive roots $\alpha_{i,j}=\beta_i-\beta_j$ and $\alpha_{i,\overline j}=\beta_i+\beta_j$ and for every $1\le i\le n$ we have the short root $\alpha_{i,\overline i}=\beta_i$. We now also give the only commutation relation that we will be using explicitly:
\begin{equation}\label{comrel}
[f_{i,\overline i},f_{j,\overline j}]=2f_{i,\overline j}\text{ for }1\le i<j\le n.
\end{equation}

We introduce a few more concepts before stating our main theorem. First, for a monomial $M\in\mathcal U(\mathfrak n^-)$ in the root vectors $f_{i,j}$ let $\log M$ denote the number triangle $T_{i,j}$, $1\le i<j\le 2n+1-i$ where $T_{i,j}$ is equal to the total degree in which $M$ contains $f_{i,j}$. Second, let us call such an $M$ {\it arranged} if for all pairs $1\le i<j\le n$ the monomial $M$ does not contain an $f_{j,\overline j}$ to the left of an $f_{i,\overline i}$ (i.e. the $f_{i,\overline i}$ occurring in $M$ are ordered by $i$ increasing from left to right).

Finaly, denote $v_0$ a highest weight vector in $L_\la$.
\begin{theorem}\label{main}
For every triangle $T\in \Pi_\la$ choose an arranged monomial $M_T\in\mathcal U(\mathfrak n^-)$ with $\log M_T=T$ and denote $v_T=M_T (v_0)\in L_\la$. For any such choice the resulting set $\{v_T, T\in\Pi_\la\}$ will constitute a basis in $L_\la$.
\end{theorem}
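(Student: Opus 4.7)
The plan is to adapt the FFLV strategy of \cite{FFL1,FFL2}, with the new ingredient being a weighted PBW filtration that accommodates the short roots. The argument would proceed in three stages: a cardinality count $|\Pi_\la|=\dim L_\la$, a spanning statement for $\{v_T:T\in\Pi_\la\}$, and linear independence, which---given the count---reduces to spanning.

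A natural preliminary is the weighted PBW filtration on $\mathcal U(\mathfrak n^-)$ assigning weight $1$ to each long root vector $f_{i,j}$ and weight $\tfrac12$ to each short root vector $f_{i,\overline i}$. The relation (\ref{comrel}) is then homogeneous of total weight $1$, so the associated graded algebra $\mathrm{gr}\,\mathcal U(\mathfrak n^-)$ is \emph{almost} commutative: every pair of generators commutes except pairs of short-root vectors, which retain the commutator (\ref{comrel}). This residual non-commutativity is what forces the ``arranged'' prescription and makes it intrinsic: arranged monomials are precisely the PBW-type normal forms in $\mathrm{gr}\,\mathcal U(\mathfrak n^-)$. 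The cardinality identity $|\Pi_\la|=\dim L_\la$ should follow either from an explicit bijection between $\Pi_\la$ and Gelfand-Tsetlin patterns for $\mathfrak{so}_{2n+1}$ (both index sets have $n^2$ entries) or, in the FFLV spirit, by computing the Ehrhart polynomial of the polytope cut out by the Dyck path inequalities and matching it against the Weyl character formula.

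The main work is the spanning argument, which I would run by induction on a monomial order on triangles refining the weighted PBW order. For a PBW monomial $M$ with $\log M\notin\Pi_\la$, one chooses a Dyck path $d$ witnessing the violated inequality $S(\log M,d)\le M(\la,d)$. The algebraic identity to exploit is the annihilation of $v_0$ by certain elements of $\mathcal U(\mathfrak n^-)$ obtained by acting with $\mathcal U(\mathfrak n^+)$-monomials read off combinatorially from $d$ on the highest-weight annihilation conditions. Rearranging such an identity rewrites $Mv_0$ as a linear combination of vectors $v_{T'}$ with strictly smaller $T'$, modulo lower-order corrections from the weighted filtration, which closes the induction.

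The main obstacle I anticipate is the short-root case $i_N+j_N=2n+1$, where the half-integer contribution $T_{i_N,j_N}/2$ appears in $S(T,d)$ and the surviving commutator (\ref{comrel}) must be handled carefully. Here one must verify both that the reductions stay within arranged monomials---so the induction is well-posed---and that the weighted degree bookkeeping correctly reflects the combinatorial weight $\tfrac12$ of short roots. Once spanning is established, independence follows from $\dim L_\la\ge|\Pi_\la|$; alternatively, one can check it directly in the weighted associated graded module by verifying that the images of the $v_T$ (for arranged $M_T$) have pairwise distinct leading monomials.
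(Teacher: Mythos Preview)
Your outline shares the paper's framing---the weighted filtration with $\grad f_{i,\overline i}=\tfrac12$, the cardinality identity via Gelfand--Tsetlin patterns, and the reduction to a spanning statement---but the core spanning argument you propose is \emph{not} the one the paper carries out, and as written it has a real gap.

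You propose a direct FFL-style straightening: for each monomial $M$ with $\log M\notin\Pi_\la$, pick a violated Dyck path $d$, and derive a relation in $\mathcal U(\mathfrak n^-)v_0$ by hitting the highest-weight annihilation conditions $f_{i,i+1}^{a_i+1}v_0=0$ with suitable elements of $\mathcal U(\mathfrak n^+)$ ``read off from $d$''. In types $A$ and $C$ this works because the resulting differential-operator identities straighten any offending monomial into strictly smaller ones. In type $B$, however, you have not exhibited these relations, and this is exactly where the short-root commutator~(\ref{comrel}) bites: the relations one obtains by applying $e$'s are not homogeneous for the weighted grading in any obvious way, and there is no a priori reason the rewriting stays inside arranged monomials or decreases under your order. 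You flag this as ``the main obstacle,'' but the proposal contains no mechanism for resolving it; without those relations in hand, the induction never gets started.

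The paper avoids this problem entirely by running the induction on $\lambda$ rather than on monomials. The spanning statement is first proved for each fundamental weight and for $2\omega_n$ by brute-force computation inside the explicit realizations $L_{\omega_l}\cong\wedge^l L_{\omega_1}$ (and the spin module), a lengthy case analysis that replaces $f_{i_1,j_1}f_{i_2,j_2}$ by explicit combinations of monomials that are $\prec$-smaller. The induction step is then a Minkowski-sum/tensor-product argument: one shows $\Pi_\la\subset\Pi_{\lambda'}+\Pi_\varepsilon$ for a suitable decomposition $\lambda=\lambda'+\varepsilon$ with $\varepsilon$ fundamental or $2\omega_n$, embeds $L_\la\hookrightarrow L_{\lambda'}\otimes L_\varepsilon$, and uses the monomiality of $\prec$ to propagate both the basis and the triangularity statement. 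Your weighted filtration and the passage from ordered to arranged monomials (your ``PBW normal forms'' remark) do appear in the paper, but only as the glue between the ordered-monomial spanning theorem and the arranged-monomial statement of Theorem~\ref{main}, not as the engine of the spanning proof itself.
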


\begin{remark}\label{remAC}
We see that, unlike the FFLV bases for types $A$ and $C$ constructed in~\cite{FFL1} and~\cite{FFL2}, the root vectors in the monomials may not be ordered arbitrarily. Instead we require the monomials to be arranged. It can be easily seen that some restriction of this sort must indeed be imposed. For instance, for $n=2$ and $\lambda=2\omega_2$ the module $L_\la$ may be described as $\wedge^2 V$ with $V=\spanned(e_2,e_1,e_0,e_{-1},e_{-2})$ and highest weight vector $e_1\wedge e_2$ (see Section~\ref{basesec} for a detailed description of the representation). Here one already has $\log(f_{2,3}f_{1,4}f_{2,3})\in\Pi_\lambda$ but \[f_{2,3}f_{1,4}f_{2,3}(v_0)=f_{2,3}f_{1,4}f_{2,3}(e_1\wedge e_2)=f_{2,3}f_{1,4}(e_1\wedge e_0)=-2f_{2,3}(e_1\wedge e_{-1})=0.\]

However, numerical experimentation has shown that a basis may sometimes be obtained despite some of the $M_T$ not being arranged. For example, it seems plausible that avoiding monomials with a subexpression of the form $f_{i,\overline i}\ldots f_{j,\overline j}\ldots f_{i,\overline i}$ with $i\neq j$ is sufficient to obtain a basis. Still, even such a theorem would not be the most general in this vein. It would be interesting to know if there exists a concise combinatorial criterion distinguishing those sets of monomials $\{M_T, \log M_T=T, T\in\Pi_\la\}$ for which $\{v_T=M_T(v_0)\}$ is a basis in $L_\lambda$. 
\end{remark}

\section{Bijection with the Gelfand-Tsetlin basis}

Our first step towards the proof of Theorem~\ref{main} will be showing that we indeed have $|\Pi_\la|=\dim L_\la$. This will be done by establishing a bijection between $\Pi_\la$ and the Gelfand-Tsetlin basis in $L_\la$ (see~\cite{Mo}) or, more precisely, the corresponding set of Gelfand-Tsetlin patterns.

The set $\Gamma_\la$ of Gelfand-Tsetlin patterns parametrizing the Gelfand-Tsetlin basis in $L_\la$ consists, once again, of certain number triangles $R=\{R_{i,j}\}$ with $1\le i<j\le \overline i$. We have $R\in\Gamma_\la$ if and only if the following requirements are met. 
\begin{enumerate}
\item All $R_{i,j}$ are nonnegative half-integers and if $i+j<2n+1$, then $R_{i,j}$ is congruent to (all of) the $\lambda_i$ modulo 1.
\item For $1\le i\le n-1$ we have $\lambda_i\ge R_{i,i+1}\ge\la_{i+1}$ and we also have $\la_n\ge R_{n,n+1}$.
\item If $j-i>1$ and $i+j<2n+1$, then $R_{i,j-1}\ge R_{i,j}\ge R_{i+1,j}$. If $j-i>1$ and $i+j=2n+1$, then $R_{i,j-1}\ge R_{i,j}$.
\end{enumerate}

When considering a pattern $R\in\Gamma_\la$ we at times refer to $n$ additional fixed elements $R_{i,i}=\la_i$ for $1\le i\le n$. These are naturally visualized as an additional top row of the triangle. Then (2) and (3) simply state that every element is no greater than its upper-left neighbor and no less than its upper-right neighbor (whenever the neighbor in question exists).

In~\cite{Mo} it is shown that the defined set $\Gamma_\la$ parametrizes a certain basis in $L_\la$, we now define a map $F:\Gamma_\la\to\Pi_\la$ which we then show to be bijective. Namely, for a pattern $R\in\Gamma_\la$ and a pair $1\le i<j\le\overline i$ set 
$$
F(R)_{i,j}=
\begin{cases}
\min(R_{i,j-1},R_{i-1,j})-R_{i,j}&\text{ if }i>1\text{ and }i+j<2n+1,\\
R_{i,j-1}-R_{i,j}&\text{ if }i=1\text{ and }j<2n,\\
2(\min(R_{i,j-1},R_{i-1,j})-R_{i,j})&\text{ if }i>1\text{ and }i+j=2n+1,\\
2(R_{i,j-1}-R_{i,j})&\text{ if }i=1\text{ and }j=2n.
\end{cases}
$$ 
Note that $R_{i,j-1}$ and $R_{i-1,j}$ are, respectively, the upper-left and bottom-left neighbors of $R_{i,j}$. Above, one of the first two cases takes place if $R_{i,j}$ is not in the rightmost vertical column, otherwise, one of the last two takes place. Cases 1 and 3 take place when $R_{i,j}$ does have a bottom-left neighbor, otherwise, one of cases 2 and 4 takes place (i.e. $i=1$).

\begin{theorem}
The map $F:\Gamma_\la\to\Pi_\la$ is well-defined and bijective.
\end{theorem}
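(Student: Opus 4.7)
The plan is to prove the two claims in parallel, with the inverse $G\colon\Pi_\la\to\Gamma_\la$ obtained by solving the defining formula of $F$ for $R_{i,j}$. One computes the entries of $R$ diagonal by diagonal (first $i=1$, then $i=2$, and so on), and within each diagonal in order of increasing $j$, via
\[
R_{i,j} = \min(R_{i,j-1},\,R_{i-1,j}) - T_{i,j}
\]
(with $R_{i-1,j}$ omitted from the minimum when $i=1$, and $T_{i,j}$ replaced by $T_{i,j}/2$ when $i+j=2n+1$). The identities $F\circ G = \id$ and $G\circ F = \id$ are then tautological, so the work lies entirely in checking that $F(R)\in\Pi_\la$ for $R\in\Gamma_\la$ and that $G(T)\in\Gamma_\la$ for $T\in\Pi_\la$.

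For the forward direction, nonnegativity and integrality of each $F(R)_{i,j}$ are immediate from the $\Gamma_\la$ axioms (the factor $2$ in the rightmost column absorbs the half-integer shift). The Dyck-path inequality $S(F(R),d)\le M(\la,d)$ is the main content. The key observation is that for any two consecutive points $(i_k,j_k),(i_{k+1},j_{k+1})$ of a Dyck path, $R_{i_k,j_k}$ is either the upper-left neighbour $R_{i_{k+1},j_{k+1}-1}$ or the bottom-left neighbour $R_{i_{k+1}-1,j_{k+1}}$ of $R_{i_{k+1},j_{k+1}}$, depending on the direction of the step. Consequently
\[
F(R)_{i_{k+1},j_{k+1}} \le R_{i_k,j_k} - R_{i_{k+1},j_{k+1}},
\]
with the same bound for the half-weighted term $\tfrac12 F(R)_{i_N,j_N}$ at a rightmost-column endpoint. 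Combining these with the initial bound $F(R)_{i_1,j_1}\le \la_{i_1}-R_{i_1,j_1}$ (using $R_{i_1,i_1}=\la_{i_1}$) and summing, the result telescopes to $\la_{i_1}-R_{i_N,j_N}$; the required inequality reduces either to $R_{i_N,j_N}\ge\la_{j_N}$ (a GT top-row interlacing, if $j_N-i_N=1$) or to $R_{i_N,j_N}\ge 0$ (GT nonnegativity, if $i_N+j_N=2n+1$).

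For the reverse direction, the interlacings $R_{i,j-1}\ge R_{i,j}$ and $R_{i,j}\ge R_{i+1,j}$ are automatic from $T_{i,j}\ge 0$, the correct congruences follow from the integrality of the $T_{i,j}$, and interior nonnegativity reduces via the interlacing chain to the boundary cases. The remaining nontrivial conditions are the top-row lower bound $R_{i,i+1}\ge\la_{i+1}$ (for $i<n$) and the rightmost-column nonnegativity $R_{i,\overline i}\ge 0$. To prove these I unfold the $\min$-recursion completely: $R_{i,j}$ equals the minimum, over all \emph{reduction paths} $P$ running from $(i,j)$ to some fixed-top-row cell $(i',i')$ via upper-left and bottom-left steps in the pattern, of the quantity $\la_{i'}$ minus the sum of the $T_{a,b}$ over $P\setminus\{(i',i')\}$ (the $(i,j)$ entry being half-weighted in the rightmost case). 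The reverse of such a $P$, with the terminus $(i',i')$ dropped, is exactly a Dyck path $\hat P\in\Pi_\la$ beginning at $(i',i'+1)$ and ending at $(i,j)$; this endpoint lies in the top row or in the rightmost column in precisely the two situations we need. The $\Pi_\la$ inequality $S(T,\hat P)\le M(\la,\hat P)$ then gives the desired lower bound on each branch of the $\min$, hence on $R_{i,j}$. The main obstacle I expect is this last step, namely checking that the $\tfrac12$ conventions on the rightmost column in the definitions of $F$ and $S$ match up exactly with the half-integer entries of $\Gamma_\la$ so that the reduction-to-Dyck-path correspondence is weight-preserving; once this is set up cleanly, the forward and reverse arguments become mirror images of the same telescoping estimate.
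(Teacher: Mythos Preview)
Your proposal is correct and follows essentially the same approach as the paper. The only cosmetic difference is that the paper defines the inverse $G$ directly by the closed-form expression $G(T)_{i,j}=\min_d(\la_{i_1}-S(T,d))$, the minimum taken over \emph{partial Dyck paths} ending at $(i,j)$; this is exactly what your recursive definition unfolds to, and the verification that $G(T)\in\Gamma_\la$ and the telescoping estimate for $S(F(R),d)$ are carried out identically.
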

\begin{proof}
First we show that the image of $F$ is contained in $\Pi_\lambda$. The fact that all $F(R)_{i,j}$ for an $R\in\Gamma_\la$ are nonnegative integers is immediate from the definition of $F$ and properties (1) - (3) above. Now consider a Dyck path $d=((i_1,j_1),\ldots,(i_N,j_N))$. If $j_N-i_N=1$ and $i_N+j_N<2n+1$, we have 
\begin{multline*}
S(F(R),d)\le (\la_{i_1}-R_{i_1,j_1})+(R_{i_1,j_1}-R_{i_2,j_2})+\ldots+(R_{i_{N-1},j_{N-1}}-R_{i_N,j_N})\le\\\la_{i_1}-R_{i_N,j_N}\le\la_{i_1}-\la_{j_N}.
\end{multline*}
If $i_N+j_N=2n+1$, we have
\begin{multline*}
S(F(R),d)\le (\la_{i_1}-R_{i_1,j_1})+(R_{i_1,j_1}-R_{i_2,j_2})+\ldots+(R_{i_{N-1},j_{N-1}}-R_{i_N,j_N})\le\\\la_{i_1}-R_{i_N,j_N}\le\la_{i_1}.
\end{multline*}

To prove that $F$ is bijective we describe the inverse map $G:\Pi_\la\to\Gamma_\la$. 

First we introduce the notion of a {\it partial Dyck path}. A partial Dyck path is a sequence $d=((i_1,j_1),\ldots,(i_N,j_N))$ such that for all $1\le k\le N$ we have $1\le i_k<j_k\le \overline{i_k}$, that $j_1-i_1=1$ and that for all $1\le k\le N-1$ the pair $(i_{k+1},j_{k+1})$ is equal to either $(i_k+1,j_k)$ or $(i_k,j_k+1)$.  For such a $d$ and a number triangle $T=\{T_{i,j},1\le i<j\le \overline i\}$ we set $$S(T,d)=\sum_{k=1}^N c_k T_{i_k,j_k},$$ where $c_k=1$ if $i_k+j_k<2n+1$ and $c_k=\frac12$ otherwise. 

For a $T\in\Pi_\la$ we define $$G(T)_{i,j}=\min\limits_{\substack{\text{partial Dyck path}\\d=((i_1,j_1),\ldots,(i,j))}}(\la_{i_1}-S(T,d)).$$ Let us show that $G(T)\in\Gamma_\la$. Property (1) is immediate except for the nonnegativity. Any partial Dyck path $d=((i_1,j_1),\ldots,(i,j))$ may be extended to a Dyck path $d'=((i_1,j_1),\ldots,(i',j'))$ with $i'+j'=2n+1$. However, $S(T,d)\le S(T,d')\le\la_{i_1}$ and the nonnegativity ensues. Property (3) is immediate from the definition of $G$, the fact that $G(T)_{i,i+1}\le\la_i$ for $1\le i\le n$ is also immediate. Now, if for some $1\le i\le n-1$ we had $G(T)_{i,i+1}<\la_{i+1}$, then for some Dyck path $d=((l,l+1),\ldots,(i,i+1))$ we would have $S(T,d)=\la_l-G(T)_{i,i+1}>\la_l-\la_{i+1}=M(\la,d)$.

Finally, the fact that $F$ and $G$ are mutually inverse is straightforward from their definitions.
\end{proof}

\begin{cor}\label{gt}
$|\Pi_\la|=\dim L_\la$.
\end{cor}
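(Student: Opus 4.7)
The plan is essentially a one-line deduction from the two ingredients the paper has already assembled, so I would not expect any real obstacle here; the content of the corollary is simply the combination of a bijection with a known dimension count.

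First, I would invoke the theorem just proved, which establishes that the map $F:\Gamma_\la\to\Pi_\la$ is a bijection. This immediately yields the equality of cardinalities $|\Pi_\la|=|\Gamma_\la|$, so the combinatorial side of the statement reduces to computing $|\Gamma_\la|$.

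Second, I would cite Molev~\cite{Mo}, which shows that $\Gamma_\la$ (the set of type $B$ Gelfand--Tsetlin patterns subject to conditions (1)--(3) listed before Theorem~2.1) indexes a basis of the irreducible $\mathfrak{so}_{2n+1}$-module $L_\la$. Consequently $|\Gamma_\la|=\dim L_\la$. Chaining the two equalities gives the desired $|\Pi_\la|=\dim L_\la$.

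The only thing worth double-checking is that the conditions (1)--(3) stated in the paper genuinely match the conditions under which Molev's Gelfand--Tsetlin basis is indexed; this is a routine matter of comparing conventions (in particular the half-integrality in the rightmost column, which corresponds to the short simple root of type $B$), and once that is verified no further work is needed.
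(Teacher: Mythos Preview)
Your proposal is correct and matches the paper's approach exactly: the corollary is an immediate consequence of the bijection $F:\Gamma_\la\to\Pi_\la$ established in the preceding theorem together with the fact (cited from~\cite{Mo} just before that theorem) that $\Gamma_\la$ parametrizes a basis of $L_\la$. The paper does not even spell out a separate proof for this corollary, as it follows directly from these two ingredients.
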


Since the above equality has been established it suffices to either prove that the set considered in Theorem~\ref{main} spans $L_\lambda$ or that it is linearly independent. In fact, we will proceed by a certain induction on $\la$ and, in a sense, prove the former for the base and the latter for the step.

\begin{remark}\label{polytope}
It is evident from the definition of $\Pi_\la$ that it may naturally be viewed as the set of integer points inside a certain convex polytope $P_\la\subset\mathbb R^{n^2}$. (This polytope can actually be obtained from a suitable type $C$ FFLV polytope via a diagonal linear transformation.) 

It is worth noting that our bijection $F$ is very much in the spirit of the bijection between the sets of integer points of a poset's order polytope and of its chain polytope (constructed in~\cite{stan}). Furthermore, it is even closer in spirit to the bijection between the sets of integer points of a marked order polytope and of a marked chain polytope (constructed in~\cite{ABS}). This is despite the fact that $P_\la$ is not a marked chain polytope per se.

On the other hand, the type $B$ Gelfand-Tsetlin polytope {\it is} a marked order polytope. This is observed in Section 4.3 of~\cite{ABS} and lets the authors suppose that a monomial basis in $L_\lambda$ is provided by (some modicfication of) the set $S(\lambda)$ obtained from $\Gamma_\lambda$ under the piecewise linear bijection with the corresponding marked chain polytope. We point out, however, that $\Pi_\lambda$ appears to be quite different from $S(\lambda)$ although, of course, both are in bijection with $\Gamma_\lambda$.
\end{remark}

\section{Ordered monomials}\label{ordmon}

Before carrying out our induction we introduce a few technical tools which, in particular, will let us eliminate arbitrary choices from the statement of Theorem~\ref{main}.

First we define a linear order on the set of positive $\mathfrak g$-roots, i.e. the set of integer pairs $1\le i<j \le\overline i$. We set $(i_1,j_1)\ll(i_2,j_2)$ whenever $i_1+j_1<i_2+j_2$ or $i_1+j_1=i_2+j_2$ and $i_1<i_2$ ordering the elements of a triangle from left to right and within a vertical column from bottom to top.

Next, we term a monomial $M\in\mathcal U(\mathfrak n^-)$ in the elements $f_{i,j}$ {\it ordered} if the elements occurring in $M$ are ordered according to $\ll$, i.e. if both $f_{i_1,j_1}$ and $f_{i_2,j_2}$ occur and $(i_1,j_1)\ll(i_2,j_2)$, then $f_{i_1,j_1}$ occurs to the left of $f_{i_2,j_2}$. Note that every ordered monomial is arranged and that the ordered monomials comprise a basis in $\mathcal U(\mathfrak n^-)$. For any monomial $X$ in the $f_{i,j}$ we denote $\ord(X)$ the ordered monomial with $\log\ord(X)=\log X$. For a number triangle $T=\{T_{i,j},1\le i<j\le\overline i\}$ with nonnegative integer elements we denote $\exp T$ the ordered monomial with $\log\exp(T)=T$.

Finally, $\ll$ induces a certain graded lexicographical order on the set of ordered monomials which we denote $\prec$. The grading of (any) monomial $M$ is given by $$\grad M=\sum_{1\le i<j\le \overline i}c_{i,j}(\log M)_{i,j},$$ where $c_{i,j}=1$ if $i+j<2n+1$ and $c_{i,j}=\frac12$ otherwise. For ordered monomials $M$ and $N$ we then write $M\prec N$ whenever $\grad M<\grad N$ or $\grad M=\grad N$ and the $\ll$-minimal pair $(i,j)$ with $(\log M)_{i,j}\neq(\log N)_{i,j}$ satisfies $(\log M)_{i,j}<(\log N)_{i,j}$.

An important property of $\prec$ is that it is {\it monomial} in the sense that if $M\prec N$ and $X\prec Y$, then $\ord(MX)\prec\ord(NY)$.

The key statement that we will be proving by induction can now be given as follows.
\begin{theorem}\label{ind}
Let $M$ be an ordered monomial with $\log M\notin\Pi_\la$. Then the vector $Mv_0\in L_\la$ can be expressed as a linear combination of vectors of the form $Kv_0$ with $K$ ordered and $K\prec M$.
\end{theorem}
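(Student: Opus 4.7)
The plan is to follow the FFLV paradigm of \cite{FFL1, FFL2}: produce a Dyck-path-specific algebraic relation in $U(\mathfrak{n}^-)$ that annihilates $v_0$ and whose leading term in the ordered PBW basis matches the restriction of $M$ to the path. Since $\log M \notin \Pi_\la$, I fix a Dyck path $d = ((i_1, j_1), \ldots, (i_N, j_N))$ with $S(\log M, d) > M(\la, d)$, and factor the ordered monomial $M$ as $M = M_L \cdot M_d \cdot M_R$, where $M_d = \prod_{(i,j) \in d} f_{i,j}^{(\log M)_{i,j}}$ collects the factors indexed by roots on the path. This $M_d$ is automatically $\ll$-ordered, because $i+j$ strictly increases along $d$.

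The central step is a key algebraic lemma: for every Dyck path $d$ and every tuple of nonnegative integers $(s_{i,j})_{(i,j)\in d}$ satisfying $\sum_{(i,j)} c_{i,j} s_{i,j} > M(\la, d)$ (with $c_{i,j}$ the weights $1$ or $\frac{1}{2}$ used in the definitions of $S$ and $\grad$), the ordered product $N = \prod_{(i,j) \in d} f_{i,j}^{s_{i,j}}$ satisfies
\[ Nv_0 = \sum_{K \prec N} c_K\, K v_0 \]
for ordered monomials $K$. I would derive this from the $\mathfrak{sl}_2$-theoretic vanishing $f_\gamma^{m+1} v_0 = 0$, where $\gamma$ is the ``boundary root'' of the path: in the top-row case ($j_N - i_N = 1$, $i_N + j_N < 2n+1$) take $\gamma = \beta_{i_1} - \beta_{j_N}$ and $m = \la_{i_1} - \la_{j_N}$, and in the rightmost-column case ($i_N + j_N = 2n+1$) take $\gamma = \beta_{i_1}$ and $m = 2\la_{i_1}$ (reflecting that the short coroot doubles the natural eigenvalue on $v_0$). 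After re-expressing $f_\gamma^{m+1}$ in ordered PBW form via Serre relations and the commutator (\ref{comrel}), its leading $\prec$-term should be identified with the desired Dyck product and the rest should furnish the $\prec$-lower corrections.

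Once the lemma is established, Theorem~\ref{ind} follows by substitution. Writing $Mv_0 = M_L \sum_{K \prec M_d} c_K K M_R v_0$, each unordered product $M_L K M_R$ expands in the ordered PBW basis as a sum of ordered monomials, all of which are $\prec M$ by the monomial property of $\prec$ together with the observation that $\mathrm{ord}(M_L K M_R) \prec \mathrm{ord}(M_L M_d M_R) = M$ whenever $K \prec M_d$.

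The principal obstacle is the proof of the key lemma in the rightmost-column case. The doubled critical exponent $2\la_{i_1}+1$ and the factor of $2$ in (\ref{comrel}) together engineer the $\frac{1}{2}$-weighting of the short-root term in $M(\la, d)$ when $i_N + j_N = 2n+1$, but carrying out the ordered PBW expansion of $f_{\beta_{i_1}}^{2\la_{i_1}+1}$ and identifying its leading $\prec$-term with the path product requires substantial combinatorial bookkeeping. Repeated use of $[f_{i,\overline i}, f_{j,\overline j}] = 2 f_{i,\overline j}$ continually mixes short-root and long-root factors, and the non-commutativity of the associated graded algebra (cf.\ Remark~\ref{remAC}) means spurious cancellations can appear unless the expansion is tracked carefully in the $\prec$-order. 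This short/long interplay is precisely what distinguishes type $B$ from types $A$ and $C$ in this framework and is where the bulk of the technical work lies.
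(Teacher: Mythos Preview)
Your proposal follows the original FFLV strategy of \cite{FFL1,FFL2}: produce, for each violated Dyck path, a straightening relation in $\mathcal U(\mathfrak n^-)$ whose $\prec$-leading term is the path product, and then substitute. The paper does something entirely different. It proves Theorem~\ref{ind} by induction on $\lambda$: the base cases $\lambda\in\{\omega_1,\ldots,\omega_{n-1},2\omega_n,\omega_n\}$ are handled by explicit computation inside $\wedge^l L_{\omega_1}$ and the spin module (a lengthy case analysis occupying Section~\ref{basesec}), and the inductive step uses a Minkowski-type splitting $\Pi_\lambda\subset\Pi_{\lambda'}+\Pi_\varepsilon$ (Lemma~\ref{mink}) together with the tensor product embedding $L_\lambda\hookrightarrow L_{\lambda'}\otimes L_\varepsilon$. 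No relation of the form ``$f_\gamma^{m+1}v_0=0$ spread along a path'' ever appears.

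Your outline also has two concrete gaps. First, the factorisation $M=M_L\cdot M_d\cdot M_R$ is not generally available: along a Dyck path the value $i+j$ jumps by $1$ or $2$, so non-path factors of $M$ with intermediate $i+j$ will be interleaved with path factors in the $\ll$-order, and $M_d$ is not a contiguous subword of $M$. In types $A$ and $C$ this is harmless because one works in the abelian associated graded; here the associated graded is non-commutative (precisely the point of Remark~\ref{remAC}), so extracting $M_d$ creates error terms that must themselves be controlled. Second, and more seriously, your ``key lemma'' is asserted rather than proved. Expanding $f_{\beta_{i_1}}^{2\lambda_{i_1}+1}$ in the ordered basis and showing that its $\prec$-leading part is exactly the path product with the correct $\tfrac12$-weighted exponent bound is the entire content of the theorem in the short-root case; your last paragraph correctly identifies this as the crux but offers no mechanism. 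The paper's choice to bypass this computation in favour of explicit fundamental modules and tensor products suggests that a clean differential-operator argument of FFLV type is not readily available in type $B$.
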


It is evident that Theorem~\ref{ind} provides the special case of Theorem~\ref{main} in which all the monomials $M_T$ are ordered. However, it is actually not that hard to deduce the general case.
\begin{proposition}\label{implies}
Theorem~\ref{ind} implies Theorem~\ref{main}.
\end{proposition}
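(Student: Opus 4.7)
The plan is to first establish the theorem for the specific choice $M_T=\exp(T)$ and then deduce the general arranged case by a triangularity argument with respect to the $\grad$-filtration on $\mathcal{U}(\mathfrak n^-)$.

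For the ordered case, I combine PBW with Theorem~\ref{ind}. Since $L_\la=\mathcal{U}(\mathfrak n^-)v_0$ and ordered monomials span $\mathcal{U}(\mathfrak n^-)$, the vectors $\{Nv_0:N\text{ ordered}\}$ span $L_\la$. Whenever $\log N\notin\Pi_\la$, Theorem~\ref{ind} rewrites $Nv_0$ as a linear combination of $Kv_0$ with $K$ ordered and $K\prec N$. Because $\prec$ restricted to ordered monomials of any fixed weight is a well-ordering on a finite set, this reduction terminates and expresses every $Nv_0$ as a linear combination of $\{\exp(T)v_0:T\in\Pi_\la\}$. Corollary~\ref{gt} then upgrades this spanning set to a basis.

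For the reduction of the general arranged case to the ordered one, the key tool is the associated graded algebra $\mathrm{gr}\,\mathcal{U}(\mathfrak n^-)$ with respect to the $\grad$-filtration. A case analysis of positive-root sums shows that among commutators $[f_\alpha,f_\beta]$ only the (short, short) one from (\ref{comrel}) preserves total grade, while (long, long) and (long, short) commutators strictly decrease grade and hence vanish in the associated graded. Thus in $\mathrm{gr}\,\mathcal{U}(\mathfrak n^-)$ the long root vectors are central and the short root vectors satisfy only the relations (\ref{comrel}). Since $M_T$ is arranged, its short factors already appear in the same order (increasing $i$) as in $\exp(T)$, while its longs may be freely repositioned in the associated graded, so the images of $M_T$ and $\exp(T)$ in the top-grade piece agree. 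This yields an identity $M_T=\exp(T)+X$ in $\mathcal{U}(\mathfrak n^-)$ with $X$ expressible through ordered monomials of grade strictly less than $\grad T$.

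Applying this to $v_0$ and invoking the ordered case to rewrite each contribution in $Xv_0$ as a combination of vectors $\exp(T')v_0$ with $T'\in\Pi_\la$ and $\grad T'<\grad T$, I obtain $M_Tv_0=\exp(T)v_0+\sum_{T'} d_{T,T'}\exp(T')v_0$ summed over such $T'$. Ordering $\Pi_\la$ by $\grad$, the resulting transition matrix from $\{\exp(T)v_0\}$ to $\{M_Tv_0\}$ is unitriangular, hence invertible, so $\{M_Tv_0\}$ is a basis of $L_\la$ as well. The subtlety I expect to handle most carefully is the associated-graded identification in the previous paragraph: namely, that bubble-sorting an arranged monomial into its ordered form never requires a (short, short) adjacent transposition at the top filtration level and therefore introduces only strictly lower-grade correction terms.
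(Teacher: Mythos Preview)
Your proof is correct and follows essentially the same approach as the paper: the paper isolates your associated-graded observation as Lemma~\ref{reorder} (proved by the same commutator case analysis you outline), then uses it together with Theorem~\ref{ind} and Corollary~\ref{gt} to run the same $\grad$-triangularity argument, only in the reverse direction (expressing each $\exp(U)v_0$ in terms of the $M_Tv_0$ rather than vice versa). Your identification of the subtle point---that rearranging an arranged monomial into ordered form never requires a short-short transposition---is exactly the content of the second clause of Lemma~\ref{reorder}.
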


First we prove the following lemma.
\begin{lemma}\label{reorder}
Let $M$ and $N$ be two monomials with $\log M=\log N$. Then, as an element of $\mathcal U(\mathfrak n^-)$, the difference $M-N$ is a linear combination of ordered monomials $K$ with $\grad K\le \grad M=\grad N$. If $M$ and $N$ are arranged, then the last inequality is necessarily strict.
\end{lemma}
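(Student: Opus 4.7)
The plan is to establish a stronger intermediate claim from which the lemma falls out by subtraction. Namely, I would prove that for any monomial $M$ in the $f_{i,j}$, the expansion of $M$ in the ordered-monomial basis of $\mathcal U(\mathfrak n^-)$ has the form $M = \ord(M) + \sum_K c_K K$, where each $K$ is an ordered monomial with $\grad K \le \grad M$; moreover, if $M$ is arranged, every such $K$ satisfies $\grad K < \grad M$ strictly. Given this, the lemma follows immediately: $\log M = \log N$ forces $\ord(M) = \ord(N)$, so subtracting the expansions for $M$ and $N$ cancels the leading ordered monomial and leaves precisely the desired combination.

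I would prove the intermediate claim by a double induction: outer induction on the number of factors in $M$, and, for a fixed number of factors, inner induction on the number of $\ll$-inversions. The base case (ordered $M$) is trivial. For the inductive step, pick an adjacent out-of-order pair $f_a f_b$ in $M$ (so $b \ll a$) and write $f_a f_b = f_b f_a + [f_a,f_b]$, yielding $M = M' + M''$. Here $M'$ is the swap (same $\log$ and same grading, one fewer inversion), and $M''$ replaces the pair by its commutator (either zero or a scalar multiple of a monomial with one fewer factor). The inner IH handles $M'$ and the outer IH handles $M''$, and combining produces an expansion of the required shape.

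The main combinatorial input, and the principal thing to check, is how $\grad$ behaves under a commutator in $\mathfrak{so}_{2n+1}$. A direct inspection of the three possibilities shows that whenever $\alpha + \beta$ is a root one has $\grad f_{\alpha+\beta} \le \grad f_\alpha + \grad f_\beta$, with equality \emph{only} in the short--short case, where $\beta_i + \beta_j$ is long and $\tfrac12 + \tfrac12 = 1$ (cf.\ the relation \eqref{comrel}). The long--long case gives $1+1 > 1$ and the short--long case gives $\tfrac12+1 > \tfrac12$, both strictly. This observation immediately delivers the nonstrict version of the claim.

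To upgrade the bound to a strict inequality in the arranged case, I would use two facts. First, an arranged $M$ can never contain an adjacent out-of-order pair consisting of two short root vectors: since all short roots $(i,\bar i)$ share $i+j=2n+1$, the order $\ll$ restricted to them coincides with the order by $i$, which is exactly what arrangedness imposes. Thus the swap in the inductive step always involves at least one long root vector, forcing $\grad M'' < \grad M$ strictly. Second, swapping two factors of which at most one is short does not alter the relative order of any two short root vectors, so $M'$ remains arranged and the inner IH may be applied in its strict form. I expect the only real subtlety to be verifying cleanly that the induction parameters decrease and that arrangedness propagates through $M'$; once the short--short exception is isolated, everything else is routine bookkeeping.
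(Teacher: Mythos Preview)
Your proposal is correct and follows essentially the same approach as the paper: both arguments rearrange one monomial into another via adjacent transpositions, track the resulting commutator terms, and rely on the key observation that $\grad[f_\alpha,f_\beta]\le\grad f_\alpha+\grad f_\beta$ with equality only in the short--short case~\eqref{comrel}, which an arranged monomial never encounters. The paper organizes the induction on the pair $(\grad M,\ \text{number of short-root factors})$ rather than your $(\text{number of factors},\ \text{number of inversions})$, and proves the statement for $M-N$ directly rather than first establishing the expansion $M=\ord(M)+\text{lower terms}$, but these are cosmetic differences.
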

\begin{proof}
Let us proceed by induction on $\grad M$ and, within a specific $\grad M$, on the value $\sum_{i=1}^n (\log M)_{i,\overline i}$, i.e. the total number of elements of the form $f_{i,\overline i}$ in $M$. The base $M=1$ is trivial. 

Consider $M\neq 1$. Let us rearrange the monomial $M$ into $N$, i.e. let us obtain $N$ from $M$ by a series of operations of the form 
\begin{equation}\label{swap}
Xf_{i_1,j_1}f_{i_2,j_2}Y\rightarrow Xf_{i_2,j_2}f_{i_1,j_1}Y.
\end{equation} 
Note that
$$Xf_{i_1,j_1}f_{i_2,j_2}Y-Xf_{i_2,j_2}f_{i_1,j_1}Y=X[f_{i_1,j_1},f_{i_2,j_2}]Y$$
and that 
\begin{equation}\label{comgrad}
\grad(X[f_{i_1,j_1},f_{i_2,j_2}]Y)\le\grad(Xf_{i_1,j_1}f_{i_2,j_2}Y)=\grad(M).
\end{equation}
The inequality above is trivial when $i_1+j_1<2n+1$ or $i_2+j_2<2n+1$ and when $j_1=\overline{i_1}$ and $j_2=\overline{i_2}$ it follows from the commutation relation~(\ref{comrel}). In the latter case note that the monomial $X[f_{i_1,j_1},f_{i_2,j_2}]Y$ contains less elements of the form $f_{i,\overline i}$ than $M$ and $N$. We thus see that the difference $M-N$ is a linear combination of monomials considered previously in our induction. Applying the induction hypothesis to a monomial $Z$ in this linear combination, we replace $Z$ with the sum of $\ord(Z)$ and a linear combination of ordered monomials $K$ with $\grad K\le\grad Z\le\grad M$. This proves our claim.

Now, if $M$ and $N$ are arranged, we need only to perform operations of form~(\ref{swap}) for which $i_1+j_1<2n+1$ or $i_2+j_2<2n+1$. In this case the inequality in~(\ref{comgrad}) is necessarily strict.
\end{proof}

\begin{proof}[Proof of Proposition~\ref{implies}]
Theorem~\ref{ind} shows that the set of vectors $\{\exp(T)v_0,T\in\Pi_\lambda\}$ spans $L_\la$ and, with Corollary~\ref{gt} taken into account, we conclude that this set is, in fact, a basis. 

To prove the Proposition let us show that, given a set $D=\{M_T v_0,T\in\Pi_\la\}$ as in the statement of Theorem~\ref{main}, we can, employing Theorem~\ref{ind}, express a chosen $\exp(U) v_0$ with $U\in\Pi_\la$ as linear combination of the vectors in $D$. Indeed, let us proceed by induction on $\grad(\exp U)$ with the base $\exp U=1$ being trivial. 

For $\exp U\neq 1$ apply Lemma~\ref{reorder} to write $$\exp U=M_U+\sum_i c_i K_i,$$ for some numbers $c_i$ and ordered monomials $K_i$ with $\grad K_i<\grad(\exp U)$. Next, employing (iterating) Theorem~\ref{ind}, express any $K_i$ for which $\log K_i\notin\Pi_\lambda$ as a linear combination of ordered monomials $K$ with $\log K\in\Pi_\lambda$ and, moreover, $K\prec K_i$ whence $\grad(K)<\grad(\exp U)$. This last inequality permits us to invoke the induction hypothesis.
\end{proof}

With Proposition~\ref{implies} established the following two sections are devoted to an inductive proof of Theorem~\ref{ind}.

\section{Induction base}\label{basesec}

In this section we prove Theorem~\ref{ind} in the cases of $\la$ being a fundamental weight and $\la=2\omega_n$. To do so we make use of explicit descriptions of the corresponding modules. 

First, we describe the module $L_{\omega_1}$, which is the $(2n+1)$-dimensional vector representation of $\mathfrak{so}_{2n+1}$. We specify the actions of the root vectors $f_{i,j}$ in terms of a distinguished basis in $L_{\omega_1}$ consisting of the vectors $e_{-n},\ldots,e_n$. These actions are as follows.
\begin{enumerate}[label=(\roman*)]
\item The element $f_{i,j}$ with $1\le i<j\le n$ maps $e_i$ to $e_j$ and $e_{-j}$ to $-e_{-i}$, mapping all other $e_k$ to 0.
\item The element $f_{i,\overline j}$ with $1\le i<j\le n$ maps $e_i$ to $e_{-j}$ and $e_j$ to $-e_{-i}$, mapping all other $e_k$ to 0.
\item The element $f_{i,\overline i}$ with $1\le i\le n$ maps $e_i$ to $e_0$ and $e_0$ to $-2e_{-i}$, mapping all other $e_k$ to 0.
\end{enumerate}

Now it can be said that for $2\le i\le n-1$ the fundamental module $L_{\omega_i}$ is the exterior power $\wedge^i L_{\omega_1}$ with highest weight vector $e_1\wedge\ldots\wedge e_i$ and that $L_{2\omega_n}$ is the exterior power $\wedge^n L_{\omega_1}$ with highest weight vector $e_1\wedge\ldots\wedge e_n$. Such characterizations of the first $n-1$ fundamental modules can be found in~\cite{carter}. To verify the characterization of $L_{2\omega_n}$ one may, for example, compute its dimension via Weyl's dimension formula and then observe that $e_1\wedge\ldots\wedge e_n$ is indeed a highest weight vector of weight $2\omega_n$ in $\wedge^n L_{\omega_1}$.  

The remaining fundamental module $L_{\omega_n}$ is the so-called spin module which will be discussed separately towards the end of this section.

\begin{proposition}\label{base}
Suppose that $L_\lambda\cong \wedge^l L_{\omega_1}$ for some $l\in [1,n]$. Then Theorem~\ref{ind} holds.
\end{proposition}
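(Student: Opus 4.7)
The plan is to use the explicit realization $L_\la \cong \wedge^l L_{\omega_1}$ with highest weight vector $v_0 = e_1 \wedge \cdots \wedge e_l$ and the fact that each $f_{i,j}$ acts as a derivation on the exterior algebra. Iterating the Leibniz rule expresses $Mv_0$ as a sum over all ways $\sigma$ to assign the factors of $M$ to the $l$ wedge positions; the sub-word $M_a^\sigma$ of factors assigned to position $a$ preserves its order from $M$, and the term contributed is $\bigwedge_{a=1}^l M_a^\sigma(e_a)$. In the vector representation, each $M_a^\sigma(e_a)$ is either zero or a scalar multiple of a single basis vector $e_{k_a}$, and the wedge vanishes unless the $k_a$'s are pairwise distinct. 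Numerical factors $-1$ and $-2$ from the action formulas, in particular from the short-root action $f_{i,\overline i}(e_0) = -2 e_{-i}$, propagate through this sum and must be tracked.

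For each $T \in \Pi_\la$ I would single out a distinguished assignment $\sigma_T$ that produces a nonzero wedge $w_T = e_{k_1(T)} \wedge \cdots \wedge e_{k_l(T)}$, and verify that $w_T$ appears with nonzero coefficient in $\exp(T) v_0$ and is maximal among the wedges occurring, relative to a monomial order on the basis of $\wedge^l L_{\omega_1}$ chosen to be compatible with $\prec$. Injectivity of $T \mapsto w_T$, combined with $|\Pi_\la| = \dim L_\la$ from Corollary~\ref{gt}, would then identify $\{\exp(T) v_0 : T \in \Pi_\la\}$ as a basis of $L_\la$. Given any ordered $M$ with $\log M \notin \Pi_\la$, I would expand $Mv_0 = \sum_{T \in \Pi_\la} c_T \exp(T) v_0$ in this basis; the violated Dyck inequality $S(\log M, d) > M(\la, d)$ should rule out any $T$ with $\exp(T) \not\prec M$ from the support of this expansion, thereby establishing Theorem~\ref{ind} in the present cases.

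The main obstacle is the construction of $\sigma_T$ and the leading-wedge identification. Two features require particular care. First, the coefficient $-2$ from the short-root action must be handled consistently, and it is precisely this factor of $2$ that matches the $\tfrac12$-weighting of short roots in both $\grad$ and $S(T,d)$, giving a combinatorial explanation for why the standard PBW degree is not the correct filtration. Second, the case $l = n$ (with $\la = 2\omega_n$) is the richest, since several short-root factors $f_{i,\overline i}$ may coexist in $\exp(T)$ and their contributions to the Leibniz sum can mingle; rewriting $Mv_0$ in the chosen basis here may require the commutation relation~\eqref{comrel} together with Lemma~\ref{reorder} to absorb any remaining reordering into $\prec$-smaller ordered monomials.
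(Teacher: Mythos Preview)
Your plan is a genuinely different strategy from the paper's. The paper never builds a leading-wedge map or a monomial order on $\wedge^l L_{\omega_1}$; instead it proves Theorem~\ref{ind} directly by explicit ``straightening'' identities. For each way a Dyck inequality can fail (four possibilities, with possibility~(II) split into nine sub-cases according to where the two offending factors sit in the triangle), the paper writes down by hand an identity in $\wedge^l L_{\omega_1}$ of the shape
\[
f_{i_1,j_1}f_{i_2,j_2}\,v_0 \;=\; (\text{sum of monomials } N \text{ with } \ord(N)\prec M)\,v_0,
\]
verified by computing both sides on $e_{i_1}\wedge e_{i_2}\wedge\cdots$ using the actions~(i)--(iii), and then invokes Lemma~\ref{reorder} to pass from $M$ to the rearranged $M'$ and from each $N$ to $\ord(N)$. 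This is tedious but entirely constructive: the $\prec$-triangularity is produced one identity at a time, without ever knowing in advance that $\{\exp(T)v_0\}$ is a basis.

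Your outline has a real gap at step~5. Even granting step~4 (that $\{\exp(T)v_0:T\in\Pi_\la\}$ is a basis, via the leading-wedge argument), writing $Mv_0=\sum_T c_T\exp(T)v_0$ tells you nothing about which $T$ occur unless you have already proved a compatibility between your wedge order and $\prec$ that holds for \emph{arbitrary} ordered $M$, not only for $M=\exp(T)$ with $T\in\Pi_\la$. Concretely, you would need: for every ordered $M$, every wedge in the support of $Mv_0$ lies weakly below (in your wedge order) some $w_T$ with $\exp(T)\preceq M$. Your sentence ``the violated Dyck inequality should rule out any $T$ with $\exp(T)\not\prec M$'' is precisely this statement, but you give no mechanism for it; the fact that $S(\log M,d)>M(\la,d)$ does not by itself bound the wedges produced by the Leibniz expansion of $Mv_0$. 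Establishing such a bound is essentially as hard as the original problem --- and in the paper's approach this is exactly where the nine cases and the explicit identities like~\eqref{case3} do the work. Without that, your plan proves only that the set is a basis, which is strictly weaker than Theorem~\ref{ind}.
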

\begin{proof}
We are given an ordered monomial $M$ with $\log M\notin\Pi_\lambda$ and we are to show that $Mv_0\in\Omega$, where $\Omega\subset\mathcal U(\mathfrak n^-)$ is the subspace spanned by vectors of the form $Kv_0$ with $K\prec M$. We assume $v_0=e_1\wedge\ldots\wedge e_l$ and denote $T=\log M$.

Our weight $\la$ is given by $\la_1=\ldots=\la_l=1$ and $\la_i=0$ for $i>l$. Hence, for Dyck paths $d$ starting in one of $(1,2),\ldots,(l,l+1)$ and ending either in one of $(l+1,l+2),\ldots,(n-1,n)$ or anywhere in the rightmost vertical column we have $M(\la,d)=1$. For all other Dyck paths $d$ we have  $M(\la,d)=0$. Therefore, by the definition of $\Pi_\lambda$, the fact that $T\notin\Pi_\lambda$ leaves us with four possibilities.
\begin{enumerate}[label=(\Roman*)]
\item We have $T_{i,j}>0$ for some $i>l$ or some $j<l+1$.
\item We have $T_{i_1,j_1}>0$ and $T_{i_2,j_2}>0$ for two distinct pairs $(i_1,j_1)$ and $(i_2,j_2)$ with $i_1\le i_2\le l$ and $l+1\le j_1\le j_2$. The meaning of these inequalities is that there exists a Dyck path passing first through $(i_1,j_1)$ and then through $(i_2,j_2)$.
\item We have $T_{i,j}>1$ for some $i\le l$ and $j\ge l+1$ with $i+j<2n+1$.
\item We have $T_{i,j}>2$ for some $i\le l$ and $i+j=2n+1$. 
\end{enumerate}

To visualize each of these possibilities and (especially) the cases they will be broken up into it is helpful to consider a partitioning of the set of pairs $1\le i<j\le\overline i$ into six different subsets. We provide the following diagram demonstrating this partitioning in the case of $n=5$ and $l=3$.

\vspace{5mm}
\setlength{\unitlength}{1mm}
\begin{center}
\begin{picture}(130,90)
\color{lightgray}
\put(20,90){\circle*{2}}
\put(40,90){\circle*{2}}
\put(60,90){\circle*{2}}
\put(80,90){\circle*{2}}
\put(100,90){\circle*{2}}
\put(30,80){\circle*{2}}
\put(50,80){\circle*{2}}
\put(70,80){\circle*{2}}
\put(90,80){\circle*{2}}
\put(40,70){\circle*{2}}
\put(60,70){\circle*{2}}
\put(80,70){\circle*{2}}
\put(100,70){\circle*{2}}
\put(50,60){\circle*{2}}
\put(70,60){\circle*{2}}
\put(90,60){\circle*{2}}
\put(60,50){\circle*{2}}
\put(80,50){\circle*{2}}
\put(100,50){\circle*{2}}
\put(70,40){\circle*{2}}
\put(90,40){\circle*{2}}
\put(80,30){\circle*{2}}
\put(100,30){\circle*{2}}
\put(90,20){\circle*{2}}
\put(100,10){\circle*{2}}

\color{black}
\multiput(66,94)(5,-5){8}{\line(1,-1){3}}
\multiput(54,94)(-5,-5){5}{\line(-1,-1){3}}
\multiput(54,94)(-5,-5){5}{\line(-1,-1){3}}
\multiput(95,55)(0,-7){7}{\line(0,-1){4.2}}
\multiput(71,31)(5,5){7}{\line(1,1){3}}
\multiput(79,79)(-5,-5){6}{\line(-1,-1){3}}

\put(25,85){$j<l+1$}

\put(50,75){$i\le l$}
\put(45,70){$l+1\le j\le n$}

\put(70,55){$i\le l$}
\put(65,50){$n<j<\overline l$}

\put(90,82){$i>l$}


\put(100,40){$i+j=2n+1$}
\put(100,35){$i\le l$ {} ($j\ge\overline l$)}

\put(81,31){$j\ge\overline l$}
\put(71,26){$i+j<2n+1$}

\end{picture}
\end{center}

\vspace{-15mm}
Our four possibilities will further be split up into cases (especially possibility (II)) and altogether there is quite a number of different situations to discuss. However, the outline of the argument will always be the same in spirit or, rather, conform to one of the following scenarios. 

In a few trivial situations we will simply have $Mv_0=0$. In all other situations we will present an arranged monomial $M'$ with $\log M'=\log M$ and show that $M'v_0\in\Omega$ which will suffice by Lemma~\ref{reorder}. Again, if we are lucky, we will simply have $M'v_0=0$, however, in general, this is not the case. In general, we will use the explicit actions of the $f_{i,j}$ provided by (i), (ii) and (iii) to express $M'v_0$ as a certain linear combination of vectors of the form $Nv_0$, where the monomials $N$ are of the following type. \label{outline}Each of these $N$ is obtained from $M'$ by replacing the product of the two ``problematic'' elements ($f_{i_1,j_1}f_{i_2,j_2}$ in possibility (II) and $f_{i,j}^2$ in possibility (III)) with a certain different expression, less (with respect to $\prec$) than the product being replaced. This then means that we have $\ord(N)\prec M$ and we are left to show that $N-\ord(N)\in\Omega$. This last assertion is proved with the help of Lemma~\ref{reorder} and, at times, a couple additional remarks.

We first deal with possibility (I). If we have $T_{i,2n+1-i}>0$ for some $i>l$, then we may assume that $i$ is the largest possible, i.e. the rightmost multiple in the monomial $M$ is $f_{i,\overline i}$. However, from (iii) we see that $f_{i,\overline i}$ maps each of $e_1,\ldots,e_l$ to 0, hence it also maps $v_0$ to 0 and the assertion is trivial.

Otherwise, we have $T_{i,j}>0$ with either $i>l$ or $j<l+1$ and with $i+j<2n+1$. Consider the monomial $M'$ obtained from $M$ by shifting one $f_{i,j}$ to the very right and preserving the order of the other elements. By Lemma~\ref{reorder} the difference $M-M'$ is a linear combination of monomials $K\prec M$. However, we have $M'v_0=0$ due to $f_{i,j}v_0=0$ which follows from (i) and (ii).

We move on to possibility (II) which will be split up into numerous cases corresponding to the numerous ways $(i_1,j_1)$ and $(i_2,j_2)$ can be positioned.

{\it Case 1.} We have $j_1\le j_2\le n$. Consider the monomial $M'$ obtained from $M$ by shifting $f_{i_1,j_1}$ and $f_{i_2,j_2}$ to the very right and preserving the order of the remaining elements, i.e. $M'=X f_{i_1,j_1}f_{i_2,j_2}$. By Lemma~\ref{reorder} it suffices to show that $M'v_0\in\Omega$. 

If $i_1=i_2$ or $j_1=j_2$ from (i) we immediately have $M'v_0=f_{i_1,j_1}f_{i_2,j_2}v_0=0$. Otherwise, we claim that $f_{i_1,j_1}f_{i_2,j_2} v_0=-f_{i_1,j_2}f_{i_2,j_1}v_0$. Indeed, let us express $v_0$ as $e_{i_1}\wedge e_{i_2}\wedge E$. Clearly, $$f_{i_1,j_1}f_{i_2,j_2} E=f_{i_1,j_2}f_{i_2,j_1}E=0$$ and we are to show that $$f_{i_1,j_1}f_{i_2,j_2}(e_{i_1}\wedge e_{i_2})=-f_{i_1,j_2}f_{i_2,j_1}(e_{i_1}\wedge e_{i_2}).$$ However, with the help of (i) it is easily seen that both of the above expressions equal $e_{j_1}\wedge e_{j_2}$.

We have obtained $M'v_0=-Xf_{i_1,j_2}f_{i_2,j_1}v_0$. By Lemma~\ref{reorder} we can rewrite the right-hand side as the sum of $-\ord(Xf_{i_1,j_2}f_{i_2,j_1}v_0)$ and an element of $\Omega$. However, since $i_1+j_2>i_1+j_1$ and $i_2+j_1>i_1+j_1$, we also have $\ord(Xf_{i_1,j_2}f_{i_2,j_1}v_0)\prec M$. 

{\it Case 2.} We have $j_1\le n$ while $n<j_2<\overline l$. The proof repeats the one from Case 1 verbatim, except that the equality $j_1=j_2$ is now impossible and that $f_{i_1,j_1}f_{i_2,j_2}(e_{i_1}\wedge e_{i_2})$ and $-f_{i_1,j_2}f_{i_2,j_1}(e_{i_1}\wedge e_{i_2})$ now both equal $e_{j_1}\wedge e_{-\overline{j_2}}$.

{\it Case 3.} We have $j_1\le n$ while $j_2\ge\overline l$ and $i_2+j_2<2n+1$. We will also assume that $T_{i,\overline i}=0$ for all $i\ge i_1$, since otherwise we would be within Case 7. 

Similarly to the previous two cases we consider $M'=X f_{i_1,j_1}f_{i_2,j_2}$ obtained from $M$ by shifting $f_{i_1,j_1}f_{i_2,j_2}$ to the very right and show that $M'v_0\in\Omega$.

First we assume that $i_1<i_2$. Invoking (i), (ii) and (iii), write down the following equalities (we omit the wedge product for brevity)
$$f_{i_1,j_1}f_{i_2,j_2}(e_{i_1}e_{i_2}e_{\overline{j_2}})=e_{j_1}e_{\overline{j_2}}e_{-\overline{j_2}}-e_{j_1}e_{i_2}e_{-i_2},$$ 
$$f_{i_1,j_2}f_{i_2,j_1}(e_{i_1}e_{i_2}e_{\overline{j_2}})=e_{-\overline{j_2}}e_{j_1}e_{\overline{j_2}}-e_{i_1}e_{j_1}e_{-i_1},$$ 
$$f_{i_1,\overline{i_2}}f_{\overline{j_2},j_1}(e_{i_1}e_{i_2}e_{\overline{j_2}})=e_{-i_2}e_{i_2}e_{j_1}-e_{i_1}e_{-i_1}e_{j_1}$$ and 
$$f_{\overline{j_2},j_1}f_{i_1,\overline{i_1}}f_{i_2,\overline{i_2}}(e_{i_1}e_{i_2}e_{\overline{j_2}})=-2e_{i_1}e_{-i_1}e_{j_1}.$$ 
We obtain 
\begin{equation}\label{case3}
f_{i_1,j_1}f_{i_2,j_2}v_0=(f_{i_1,\overline{i_2}}f_{\overline{j_2},j_1}-f_{i_1,j_2}f_{i_2,j_1}-f_{\overline{j_2},j_1}f_{i_1,\overline{i_1}}f_{i_2,\overline{i_2}})v_0
\end{equation}
(we express $v_0=e_{i_1}\wedge e_{i_2}\wedge e_{\overline{j_2}}\wedge E$ and note that all of the above monomials map $E$ to 0).

Now, similarly to Case 1 we deduce that $M'v_0$ is congruent to $$(\ord(Xf_{i_1,\overline{i_2}}f_{\overline{j_2},j_1})-\ord(Xf_{i_1,j_2}f_{i_2,j_1})-\ord(Xf_{\overline{j_2},j_1}f_{i_1,\overline{i_1}}f_{i_2,\overline{i_2}}))v_0$$ modulo $\Omega$ and observe that each of the monomials in the expression above lies in $\Omega$. To prove this last assertion we verify that all the $f_{i,j}$ appearing in the right-hand side of~(\ref{case3}) satisfy $i+j>i_1+j_1$: 
\begin{equation}\label{i1oi2}
i_1+\overline{i_2}> i_1+j_2\ge i_1+j_1,
\end{equation}
\begin{equation}\label{oj2j1}
\overline{j_2}+j_1>i_2+j_1\ge i_1+j_1,
\end{equation}
\begin{equation}\label{i1andi2}
i_1+\overline{i_1}=i_2+\overline{i_2}=2n+1>i_2+j_2>i_1+j_1
\end{equation}
and $(i_1,j_2)$ and $(i_2,j_1)$ are considered trivially (as in Case~1).
We also make use of the fact that $Xf_{\overline{j_2},j_1}f_{i_1,\overline{i_1}}f_{i_2,\overline{i_2}}$ is arranged which is due to the assumption made at the beginning of this case.

Finally, if we have $i_1=i_2$, we have the easily obtainable identity $$f_{i_1,j_1}f_{i_2,j_2}v_0=-\frac12f_{\overline{j_2},j_1}f_{i_1,\overline{i_1}}^2v_0.$$ The rest of the argument is analogous and makes use of~(\ref{oj2j1}) and~(\ref{i1andi2}). 

{\it Case 4.} We have $n<j_1\le j_2<\overline l$. The proof repeats the one from Case 1 verbatim, except that $f_{i_1,j_1}f_{i_2,j_2}(e_{i_1}\wedge e_{i_2})$ and $-f_{i_1,j_2}f_{i_2,j_1}(e_{i_1}\wedge e_{i_2})$ now both equal $e_{-\overline{j_1}}\wedge e_{-\overline{j_2}}$.

{\it Case 5.} We have $n<j_1<\overline l\le j_2$ and $i_2+j_2<2n+1$. The proof repeats the one from Case 3 verbatim except for a substitution of $e_{-\overline{j_1}}$ for $e_{j_1}$ and the remark that having $T_{i,\overline i}>0$ for some $i\ge i_1$ would let us reduce to Case 8 (and not Case 7).

{\it Case 6.} We have $\overline l\le j_1\le j_2$ and $i_2+j_2<2n+1$. We will also assume that $T_{i,\overline i}=0$ for all $i_1\le i\le \overline{j_1}$, since otherwise we would be within Case 9. 

Here to define $M'$ we don't shift $f_{i_1,j_1}f_{i_2,j_2}$ to the very right but instead we shift it to the right of all elements except those of form $f_{i,\overline i}$ with $i>\overline{j_1}$. We denote $M'=Xf_{i_1,j_1}f_{i_2,j_2}Y$. The vector $Yv_0$ is a linear combination of vectors of two forms: either $e_{i_1}e_{i_2}e_{\overline{j_2}}e_{\overline{j_1}}E$ or $e_0e_{i_1}e_{i_2}e_{\overline{j_2}}e_{\overline{j_1}}E$ with $f_{i_1,j_1}f_{i_2,j_2}E=0$ in both cases.

Next, in the spirit of the previous cases, we assume that $i_1<i_2$ and $j_1<j_2$ and write down the equalities
\begin{multline*}
f_{i_1,j_1}f_{i_2,j_2}(e_{i_1}e_{i_2}e_{\overline{j_2}}e_{\overline{j_1}})=e_{-\overline{j_1}}e_{-\overline{j_2}}e_{\overline{j_2}}e_{\overline{j_1}}-e_{-\overline{j_1}}e_{i_2}e_{-i_2}e_{\overline{j_1}}-\\e_{i_1}e_{-\overline{j_2}}e_{\overline{j_2}}e_{-i_1}+e_{i_1}e_{i_2}e_{-i_1}e_{-i_2},
\end{multline*}
\begin{multline*}
f_{i_1,j_2}f_{i_2,j_1}(e_{i_1}e_{i_2}e_{\overline{j_2}}e_{\overline{j_1}})=e_{-\overline{j_2}}e_{-\overline{j_1}}e_{\overline{j_2}}e_{\overline{j_1}}-e_{-\overline{j_2}}e_{i_2}e_{\overline{j_2}}e_{-i_2}-\\e_{i_1}e_{-\overline{j_1}}e_{-i_1}e_{\overline{j_1}}+e_{i_1}e_{i_2}e_{-i_2}e_{-i_1},
\end{multline*}
\begin{multline*}
f_{i_1,\overline{i_2}}f_{\overline{j_2},j_1}(e_{i_1}e_{i_2}e_{\overline{j_2}}e_{\overline{j_1}})=e_{-i_2}e_{i_2}e_{-\overline{j_1}}e_{\overline{j_1}}-e_{-i_2}e_{i_2}e_{\overline{j_2}}e_{-\overline{j_2}}-\\e_{i_1}e_{-i_1}e_{-\overline{j_1}}e_{\overline{j_1}}+e_{i_1}e_{-i_1}e_{\overline{j_2}}e_{-\overline{j_2}},
\end{multline*}
$$f_{\overline{j_2},j_1}f_{i_1,\overline{i_1}}f_{i_2,\overline{i_2}}(e_{i_1}e_{i_2}e_{\overline{j_2}}e_{\overline{j_1}})=-2e_{i_1}e_{-i_1}e_{-\overline{j_1}}e_{\overline{j_1}}+2e_{i_1}e_{-i_1}e_{\overline{j_2}}e_{-\overline{j_2}},$$ 
$$f_{i_1,\overline{i_2}}f_{\overline{j_2}}f_{\overline{j_1}}(e_{i_1}e_{i_2}e_{\overline{j_2}}e_{\overline{j_1}})=-2e_{-i_2}e_{i_2}e_{\overline{j_2}}e_{-\overline{j_2}}+2e_{i_1}e_{-i_1}e_{\overline{j_2}}e_{-\overline{j_2}},$$ 
$$f_{i_1,\overline{i_1}}f_{i_2,\overline{i_2}}f_{\overline{j_2}}f_{\overline{j_1}}(e_{i_1}e_{i_2}e_{\overline{j_2}}e_{\overline{j_1}})=4e_{i_1}e_{-i_1}e_{\overline{j_2}}e_{-\overline{j_2}}.$$
We also have
$$f_{i_1,j_1}f_{i_2,j_2}(e_0e_{i_1}e_{i_2}e_{\overline{j_2}}e_{\overline{j_1}})=e_0f_{i_1,j_1}f_{i_2,j_2}(e_{i_1}e_{i_2}e_{\overline{j_2}}e_{\overline{j_1}}),$$
$$f_{i_1,j_2}f_{i_2,j_1}(e_0e_{i_1}e_{i_2}e_{\overline{j_2}}e_{\overline{j_1}})=e_0f_{i_1,j_2}f_{i_2,j_1}(e_{i_1}e_{i_2}e_{\overline{j_2}}e_{\overline{j_1}}),$$
$$f_{i_1,\overline{i_2}}f_{\overline{j_2},j_1}(e_0e_{i_1}e_{i_2}e_{\overline{j_2}}e_{\overline{j_1}})=e_0f_{i_1,\overline{i_2}}f_{\overline{j_2},j_1}(e_{i_1}e_{i_2}e_{\overline{j_2}}e_{\overline{j_1}}),$$
$$f_{\overline{j_2},j_1}f_{i_1,\overline{i_1}}f_{i_2,\overline{i_2}}(e_0e_{i_1}e_{i_2}e_{\overline{j_2}}e_{\overline{j_1}})=-2e_{-i_2}e_{0}e_{i_2}e_{-\overline{j_1}}e_{\overline{j_1}}+2e_{-i_2}e_{0}e_{i_2}e_{\overline{j_2}}e_{-\overline{j_2}},$$ 
$$f_{i_1,\overline{i_2}}f_{\overline{j_2}}f_{\overline{j_1}}(e_0e_{i_1}e_{i_2}e_{\overline{j_2}}e_{\overline{j_1}})=-2e_{-\overline{j_1}}e_{-i_2}e_{i_2}e_{0}e_{\overline{j_1}}+2e_{-\overline{j_1}}e_{i_1}e_{-i_1}e_{0}e_{\overline{j_1}},$$ 
$$f_{i_1,\overline{i_1}}f_{i_2,\overline{i_2}}f_{\overline{j_2}}f_{\overline{j_1}}(e_0e_{i_1}e_{i_2}e_{\overline{j_2}}e_{\overline{j_1}})=4e_{-\overline{j_1}}e_{0}e_{i_2}e_{-i_2}e_{\overline{j_1}}.$$
We see that we have the same linear relation for the six right-hand sides in both cases and we end up with $$M'v_0=X(f_{i_1,\overline{i_2}}f_{\overline{j_2},j_1}-f_{i_1,j_2}f_{i_2,j_1}-f_{\overline{j_2},j_1}f_{i_1,\overline{i_1}}f_{i_2,\overline{i_2}}-f_{i_1,\overline{i_2}}f_{\overline{j_2}}f_{\overline{j_1}}+f_{i_1,\overline{i_1}}f_{i_2,\overline{i_2}}f_{\overline{j_2}}f_{\overline{j_1}})Yv_0.$$

We complete the argument as in the previous cases, noting that the last three monomials in the (expanded) right-hand side above are arranged due to the assumption we made at the beginning of this case. Here we make use of~(\ref{i1oi2}),~\ref{oj2j1},~\ref{i1andi2} and of 
\begin{equation}\label{oj1andoj2}
\overline{j_1}+j_1=\overline{j_2}+j_2=2n+1>i_2+j_2>i_1+j_1.
\end{equation}

To complete the consideration of this case we are left to deal with the situations in which $i_1=i_2$ or $j_1=j_2$. When $i_1=i_2$ we write 
$$f_{i_1,j_1}f_{i_2,j_2}(e_{i_1}e_{\overline{j_2}}e_{\overline{j_1}})=-e_{-\overline{j_2}}e_{\overline{j_2}}e_{-i_1}-e_{-\overline{j_1}}e_{-i_2}e_{\overline{j_1}},$$
$$f_{\overline{j_2},j_1}f_{i_1,\overline{i_1}}^2(e_{i_1}e_{\overline{j_2}}e_{\overline{j_1}})=-2e_{-i_1}e_{-\overline{j_1}}e_{\overline{j_1}}+2e_{-i_1}e_{\overline{j_2}}e_{-\overline{j_2}},$$
$$f_{i_1,\overline{i_1}}^2f_{\overline{j_2}}f_{\overline{j_1}}(e_{i_1}e_{\overline{j_2}}e_{\overline{j_1}})=4e_{-i_1}e_{\overline{j_2}}e_{-\overline{j_2}}$$
and
$$f_{i_1,j_1}f_{i_2,j_2}(e_0e_{i_1}e_{\overline{j_2}}e_{\overline{j_1}})=e_0f_{i_1,j_1}f_{i_2,j_2}(e_{i_1}e_{\overline{j_2}}e_{\overline{j_1}}),$$
$$f_{\overline{j_2},j_1}f_{i_1,\overline{i_1}}^2(e_0e_{i_1}e_{\overline{j_2}}e_{\overline{j_1}})=-2e_{-i_1}e_0e_{-\overline{j_1}}e_{\overline{j_1}}+2e_{-i_1}e_0e_{\overline{j_2}}e_{-\overline{j_2}},$$
$$f_{i_1,\overline{i_1}}^2f_{\overline{j_2}}f_{\overline{j_1}}(e_0e_{i_1}e_{\overline{j_2}}e_{\overline{j_1}})=4e_{-\overline{j_1}}e_0e_{-i_1}e_{\overline{j_1}}=4e_{-\overline{j_1}}e_0e_{-i_2}e_{\overline{j_1}}$$
to conclude $$M'v_0=\frac12X(-f_{\overline{j_2},j_1}f_{i_1,\overline{i_1}}^2+f_{i_1,\overline{i_1}}^2f_{\overline{j_2}}f_{\overline{j_1}})Yv_0.$$

When $j_1=j_2$ we similarly derive $$M'v_0=\frac12X(-f_{i_1,\overline{i_2}}f_{\overline{j_1}}^2+f_{i_1,\overline{i_1}}f_{i_2,\overline{i_2}}f_{\overline{j_1}}^2)Yv_0.$$ In either situation the argument is then finished off as when we had $i_1<i_2$ and $j_1<j_2$.

{\it Case 7.} We have $j_1\le n$ and $i_2+j_2=2n+1$. We assume that $i_2$ is the largest $i$ for which $T_{i,2n+1-i}>0$, i.e. the rightmost element in $M$ is $f_{i_2,j_2}=f_{i_2,\overline{i_2}}$.

We define $M'$ by shifting $f_{i_1,j_1}$ to the right of all elements other than the last $f_{i_2,\overline{i_2}}$. The rest of the argument is analogous to Cases 1 - 5 and makes use of 
$$f_{i_1,j_1}f_{i_2,\overline{i_2}}(e_{i_1}e_{i_2})=e_{j_1}e_0,$$
$$f_{i_2,j_1}f_{i_1,\overline{i_1}}(e_{i_1}e_{i_2})=e_0e_{j_1},$$
$$f_{i_1,j_1}f_{i_2,\overline{i_2}}v_0=-f_{i_2,j_1}f_{i_1,\overline{i_1}}v_0$$
when $i_1<i_2$ and of $f_{i_1,j_1}f_{i_2,\overline{i_2}}v_0=0$ when $i_1=i_2$. (In the former case we make use of~(\ref{i1andi2}) substituting the first ``$>$'' for a ``=''.)

{\it Case 8.} We have $n<j_1<\overline l$ and $i_2+j_2=2n+1$. The argument repeats the one in Case 7 verbatim except for a substitution of $e_{-\overline{j_1}}$ for $e_{j_1}$.

{\it Case 9.} We have $j_1\ge\overline l$ and $i_2+j_2=2n+1$. We assume that $i_2$ is the largest $i$ with $T_{i,2n+1-i}>0$ and $i_1\le i\le \overline{j_1}$.

We define $M'$ by shifting $f_{i_1,j_1}$ to the immediate left of the rightmost $f_{i_2,\overline{i_2}}$ to obtain $M'=Xf_{i_1,j_1}f_{i_2,\overline{i_2}}Y$ with $Y$ containing only elements of the form $f_{i,\overline i}$ with $i>i_2$. We see that $Yv_0$ is a linear combination of vectors of the forms $e_{i_1}e_{i_2}e_{\overline{j_1}}E$ and $e_0e_{i_1}e_{i_2}e_{\overline{j_1}}E$ with $f_{i_1,j_1}f_{i_2,\overline{i_2}}E=0$.

First we assume that $i_1<i_2<\overline{j_1}$ and write 
$$f_{i_1,j_1}f_{i_2,\overline{i_2}}(e_{i_1}e_{i_2}e_{\overline{j_1}})=e_{-\overline{j_1}}e_0e_{\overline{j_1}}-e_{i_1}e_0e_{-i_1},$$
$$f_{i_2,j_1}f_{i_1,\overline{i_1}}(e_{i_1}e_{i_2}e_{\overline{j_1}})=e_0e_{-\overline{j_1}}e_{\overline{j_1}}-e_0e_{i_2}e_{-i_2},$$
$$f_{i_1,\overline{i_2}}f_{\overline{j_1}}(e_{i_1}e_{i_2}e_{\overline{j_1}})=e_{-i_2}e_{i_2}e_0-e_{i_1}e_{-i_1}e_0,$$
$$f_{i_1,\overline{i_1}}f_{i_2,\overline{i_2}}f_{\overline{j_1}}(e_{i_1}e_{i_2}e_{\overline{j_1}})=-2e_0e_{i_2}e_{-i_2}$$
and
$$f_{i_1,j_1}f_{i_2,\overline{i_2}}(e_0e_{i_1}e_{i_2}e_{\overline{j_1}})=-2e_{-i_2}e_{-\overline{j_1}}e_{i_2}e_{\overline{j_1}}+2e_{-i_2}e_{i_1}e_{i_2}e_{-i_1},$$
$$f_{i_2,j_1}f_{i_1,\overline{i_1}}(e_0e_{i_1}e_{i_2}e_{\overline{j_1}})=-2e_{-i_1}e_{i_1}e_{-\overline{j_1}}e_{\overline{j_1}}+2e_{-i_1}e_{i_1}e_{i_2}e_{-i_2},$$
$$f_{i_1,\overline{i_2}}f_{\overline{j_1}}(e_0e_{i_1}e_{i_2}e_{\overline{j_1}})=-2e_{-\overline{j_1}}e_{-i_2}e_{i_2}e_{\overline{j_1}}+2e_{-\overline{j_1}}e_{i_1}e_{-i_1}e_{\overline{j_1}},$$
$$f_{i_1,\overline{i_1}}f_{i_2,\overline{i_2}}f_{\overline{j_1}}(e_0e_{i_1}e_{i_2}e_{\overline{j_1}})=4e_{-\overline{j_1}}e_{i_1}e_{-i_1}e_{\overline{j_1}}$$
obtaining
$$M'v_0=X(-f_{i_2,j_1}f_{i_1,\overline{i_1}}-f_{i_1,\overline{i_2}}f_{\overline{j_1}}+f_{i_1,\overline{i_1}}f_{i_2,\overline{i_2}}f_{\overline{j_1}})Yv_0.$$

A particularity of this case is that the monomials $Xf_{i_2,j_1}f_{i_1,\overline{i_1}}Y$ and $Xf_{i_1,\overline{i_1}}f_{i_2,\overline{i_2}}f_{\overline{j_1}}Y$ will not be arranged if $X$ contains any $f_{i,\overline i}$ with $i>i_1$. However, we still claim that the expressions $$Xf_{i_2,j_1}f_{i_1,\overline{i_1}}Y-\ord(Xf_{i_2,j_1}f_{i_1,\overline{i_1}}Y)$$ and $$Xf_{i_1,\overline{i_1}}f_{i_2,\overline{i_2}}f_{\overline{j_1}}Y-\ord(Xf_{i_1,\overline{i_1}}f_{i_2,\overline{i_2}}f_{\overline{j_1}}Y)$$ lie in $\Omega$. 

Indeed, let us consider $Xf_{i_2,j_1}f_{i_1,\overline{i_1}}Y$ with $X$ containing a $f_{i,\overline i}$ with $i>i_1$. Let $Z$ be the arranged monomial obtained from $Xf_{i_2,j_1}f_{i_1,\overline{i_1}}Y$ by shifting the $f_{i_1,\overline{i_1}}$ to the immediate left of the leftmost $f_{i,\overline i}$ with $i>i_1$. This shift consists of a series of operations of the form $X'f_{i,\overline i}f_{i_1,\overline{i_1}}Y'\rightarrow X'f_{i_1,\overline{i_1}}f_{i,\overline i}Y'$ with $i>i_1$.
Note that
$$X'f_{i,\overline i}f_{i_1,\overline{i_1}}Y'-X'f_{i_1,\overline{i_1}}f_{i,\overline i}Y'=-2X'f_{i_1,\overline{i}}Y'$$
by~(\ref{comrel}). However, by definition of $M'$ we must have $i<\overline{j_1}$ whence $i_1+\overline{i}>i_1+j_1$ and $X'f_{i_1,\overline{i}}Y'\in\Omega$. Therefore, we may conclude that $Xf_{i_2,j_1}f_{i_1,\overline{i_1}}Y-Z\in\Omega$. We also have $Z-\ord(Z)\in\Omega$ by Lemma~\ref{reorder} and our assertion follows. The monomial $Xf_{i_1,\overline{i_1}}f_{i_2,\overline{i_2}}f_{\overline{j_1}}Y$ is considered analogously and the argument is completed as in the previous cases. (Note that in~(\ref{i1oi2}) the ``$>$'' is replaced with a ``$=$'' but the second inequality is strict since $j_2=\overline{i_2}>j_1$ due to our current assumption. Also note that in~(\ref{oj1andoj2}) the first ``$>$'' is replaced with a ``$=$''.)

When $i_1=i_2$ we have $$M'v_0=\frac12Xf_{i_1,\overline{i_1}}^2f_{\overline{j_1}}Yv_0.$$ When $i_2=\overline{j_1}$ we have $$M'v_0=\frac12Xf_{i_1,\overline{i_1}}f_{\overline{j_1}}^2Yv_0.$$ In both cases the argument is completed just like when we had $i_1<i_2<\overline{j_1}$, however, when $i_2=\overline{j_1}$ the monomial $M'$ is to be defined by shifting $f_{i_1,j_1}$ to the immediate left of the {\it leftmost} $f_{i_2,\overline{i_2}}=f_{\overline{j_1}}$. This last adjustment is necessary in order to avoid having a $f_{\overline{j_1}}$ in $X$. Otherwise, when shifting our $f_{i_1,\overline{i_1}}$ in $Xf_{i_1,\overline{i_1}}f_{\overline{j_1}}^2Y$ to the left we would be commuting it with $f_{\overline{j_1}}$ and obtaining $f_{i_1,j_1}$.

We have completed the consideration of possibility (II) and are left to deal with possibilities (III) and (IV).

If we are within possibility (III) and we have $j<\overline l$, then we argue as in possibility (I). We define $M'$ by shifting the $f_{i,j}^2$ to very left. It is then easily seen that $M'v_0=0$ and that $M-M'\in\Omega$ by Lemma~\ref{reorder}.

If we, however, are within possibility (III) and have $j\ge\overline l$, then our argument may be viewed as an adaptation of the argument in Case 6 above to the situation $i_1=i_2$ and $j_1=j_2$.  Namely, we define $M'$, $X$ and $Y$ as in Case 6 (setting $i_1=i_2=i$ and $j_1=j_2=j$) and obtain the relation $$M'v_0=-\frac12Xf_{i,\overline i}^2f_{\overline j,j}^2Yv_0.$$ The argument is then completed analogously to Case 6.

Finally, if we are within possibility (IV), we simply make use of the fact that $f_{i,\overline i}^3$ annihilates $L_\lambda$.
\end{proof}

We now discuss the remaining fundamental weight $\omega_n$.

\begin{proposition}
Theorem~\ref{ind} holds if $\la=\omega_n$.
\end{proposition}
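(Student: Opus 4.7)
My plan is to exploit the fact that all weight spaces of the spin module $L_{\omega_n}$ are one-dimensional: an ordered monomial $M$ either annihilates $v_0$ or sends it to a scalar multiple of a unique basis vector of the form $K_S v_0$, and the $\prec$-comparison between $K_S$ and $M$ then reduces to a short coordinate computation.

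First I would recall (using any standard realization of $L_{\omega_n}$, for instance the Clifford action on $\wedge^\bullet \mathbb{C}^n$) that $L_{\omega_n}$ has a basis $\{v_S : S \subseteq \{1,\ldots,n\}\}$ with $v_\emptyset = v_0$, where $v_S$ has weight $\omega_n - \sum_{l \in S}\beta_l$, and each short root vector $f_{l, \overline l}$ sends $v_S$ to a nonzero scalar multiple of $v_{S \cup \{l\}}$ when $l \notin S$ and to $0$ otherwise. Setting $K_S := f_{l_1,\overline{l_1}} f_{l_2,\overline{l_2}} \cdots f_{l_s,\overline{l_s}}$ for $S = \{l_1 < \cdots < l_s\}$, one obtains $K_S v_0 = \pm v_S$, so $\{K_S v_0\}_S$ is a basis of $L_{\omega_n}$. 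Note that $\Pi_{\omega_n}$ consists precisely of those triangles $T$ with $T_{a,b} = 0$ whenever $a+b < 2n+1$ and $T_{l,\overline l} \in \{0, 1\}$ for every $l$, so the basis triangles $\log K_S$ exhaust $\Pi_{\omega_n}$, in accordance with Corollary~\ref{gt}.

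Now let $M$ be ordered with $T := \log M \notin \Pi_{\omega_n}$ and set $\sigma(M) := \sum_{(a,b)} T_{a,b}\alpha_{a,b}$, so that $M v_0$ is a weight vector of weight $\omega_n - \sigma(M)$. If this weight does not occur in $L_{\omega_n}$, then $M v_0 = 0 \in \Omega$; otherwise one-dimensionality yields $M v_0 = c \cdot K_S v_0$ for the unique $S$ with $\sigma(M) = \sum_{l \in S}\beta_l$, and the claim reduces to establishing $K_S \prec M$. Letting $A$, $B$ and $D$ denote the sums of $T_{a,b}$ over positions with $b \le n$, with $n < b < 2n+1-a$, and with $a+b = 2n+1$ respectively, the coordinate sums of the corresponding $\alpha_{a,b}$ are $0$, $2$, $1$. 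Summing coordinates in $\sigma(M) = \sum_{l \in S}\beta_l$ therefore gives $|S| = 2B + D$, whence $\grad K_S = |S|/2 = B + D/2$ and $\grad M - \grad K_S = A \ge 0$. If $A > 0$ then $K_S \prec M$ follows from strict comparison of grades. Otherwise $A = 0$ and the hypothesis $T \notin \Pi_{\omega_n}$ forces either $B > 0$ or some $T_{l,\overline l} \ge 2$; since $A = 0$ removes the only potentially negative contributions to the $\beta_l$-coordinate of $\sigma(M)$, the latter subcase would give $\sigma(M)_l \ge T_{l,\overline l} \ge 2$, contradicting $\sigma(M) = \sum_{l \in S}\beta_l$ and placing us back in the case $M v_0 = 0$. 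So $B > 0$: since every pair in the second regime is $\ll$-strictly less than every short root pair while the first-regime entries of both $T$ and $\log K_S$ vanish, the $\ll$-minimum position on which $T$ and $\log K_S$ differ lies in the second regime; there $T_{a,b} > 0 = (\log K_S)_{a,b}$, so $K_S \prec M$ lexicographically.

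The main obstacle is the first paragraph: setting up a sign-consistent realization of the spin module in which the short root vectors act as anti-commuting wedge operators and which reproduces the commutation relation~\eqref{comrel}. Once that framework is in place, the remainder is just the coordinate bookkeeping carried out above.
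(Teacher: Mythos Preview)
Your argument is correct and follows essentially the same approach as the paper's: exploit the one-dimensional weight spaces of the spin module to reduce the claim to a single comparison $K_S\prec M$, then establish this comparison by weight/grade considerations. Your explicit computation $|S|=2B+D$ via the coordinate-sum identity actually makes precise the step the paper compresses into ``observe that $U\prec T$'' and cleanly separates the case $A>0$ (grade comparison) from the case $A=0,\ B>0$ (lexicographic comparison).
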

\begin{proof}
An explicit description of the spin representation $L_{\omega_n}$ may be found in~\cite{carter}. We will utilize the following properties. The spin representation has dimension $2^n$ and a basis comprised of vectors $e_I$ with $I\in\{0,1\}^n$. Vector $e_I$ has weight $\frac12\sum_{j=1}^n (-1)^{I_j}\beta_j$, in particular, $v_0=e_{\{0,\ldots,0\}}$ is the highest vector. We will also need the fact that if $I_j=0$, then $f_{j,\overline j}e_{I}$ is a nonzero multiple of $e_{I'}$, where $I'$ is obtained from $I$ by setting $I'_j=1$.

Observe that $\Pi_{\omega_n}$ is comprised of all $U$ with $U_{i,\overline i}\in\{0,1\}$ for all $1\le i\le n$ and $U_{i,j}=0$ for all $i+j<2n+1$. In view of the above properties, this already shows that the vectors $\exp(U)v_0,U\in\Pi_{\omega_n}$ comprise a basis in $L_{\omega_n}$.

Now, let $M$ be an ordered monomial with $T=\log M\notin\Pi_{\omega_n}$. If $T_{i,j}=0$ for all $i+j<2n+1$, then we must have $T_{i,\overline i}>1$ for some $i$ to avoid having $T\in\Pi_{\omega_n}$. However, we then would evidently have $Mv_0=0$ for weight reasons. If, on the contrary, we have $T_{i,j}>0$ for some $i+j<2n+1$, then we may decompose $Mv_0$ in the basis $\{\exp(U)v_0,U\in\Pi_{\omega_n}\}$ and observe that we have $U\prec T$ for any $U\in\Pi_{\omega_n}$. 
\end{proof}

\section{Induction step}

In this section we complete the proof of Theorem~\ref{ind} by transitioning from the cases discussed in the previous section to the general case. Fortunately, this transition is nowhere as tedious as the above case by case proof. It is enabled by the following Minkowski type property.

\begin{lemma}\label{mink}
Suppose that $\la\neq 0$ and is neither a fundamental weight nor $2\omega_n$. Let $l$ be the minimal $i$ such that $a_i>0$ and let $T\in\Pi_\la$. If $l<n$, set $\varepsilon=\omega_l$ and else set $\varepsilon=2\omega_n$ and denote $\lambda'=\lambda-\varepsilon$. Then there exists $T'\in\Pi_{\lambda'}$ such that $T-T'\in\Pi_\varepsilon$.
\end{lemma}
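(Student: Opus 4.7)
The plan is to construct, given $T\in\Pi_\lambda$, an element $T''\in\Pi_\varepsilon$ with $T''\le T$ entrywise; one then sets $T'=T-T''$ and verifies $T'\in\Pi_{\lambda'}$. The key structural input is the linearity $M(\lambda,d)=M(\lambda',d)+M(\varepsilon,d)$, immediate from the two-case formula for $M$.

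First, I would determine the structure of $\Pi_\varepsilon$. When $\varepsilon=\omega_l$ with $l<n$, Dyck paths $d$ with $M(\omega_l,d)=0$ force any $S\in\Pi_{\omega_l}$ to vanish outside the region $\{(i,j):i\le l,\ j\ge l+1\}$, with $S_{i,j}\in\{0,1\}$ at long-root positions and $S_{i,\overline i}\in\{0,1,2\}$ at right-column positions. When $\varepsilon=2\omega_n$ the support region is $\{(i,j):j>n\}$ with analogous bounds. In either case, an element of $\Pi_\varepsilon$ is essentially a ``Dyck antichain'' in its support region, contributing at most $1$ to $S(\cdot,d)$ along every Dyck path with $M(\varepsilon,d)=1$.

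I would then build $T''$ greedily, row by row: for each $i=1,\ldots,l$, select the $\ll$-smallest position $(i,j_i^*)$ in the allowed support at which $T$ has a positive value, and assign $T''_{i,j_i^*}\in\{1,2\}$ (the value $2$ arising only at a short-root position when $\varepsilon=2\omega_n$ calls for it). The selection is constrained so that the positions $(i,j_i^*)$ form a Dyck antichain and $T''\in\Pi_\varepsilon$. For $T'=T-T''\in\Pi_{\lambda'}$, the key point is that the greedy rule identifies, for each Dyck path $d$ where $T$ is tight against $M(\lambda,d)$, a position at which mass is removed; one then checks $S(T',d)\le M(\lambda,d)-M(\varepsilon,d)=M(\lambda',d)$ path by path.

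The main obstacle is showing that admissible greedy choices always exist simultaneously in every row $i\le l$ and are globally compatible. This requires a careful comparison of Dyck-path slacks of $T$ across different paths, using the hypothesis $a_l>0$ (which provides ``room'' to decrement in row $i$) and the structure of Dyck paths emanating from that row. The interplay between weight-$1$ long-root entries and weight-$\tfrac12$ short-root entries, and the case $\varepsilon=2\omega_n$ where a single subtraction of $2$ at a right-column entry is needed, introduce technical subtleties analogous to those in the case analysis of Section~\ref{basesec}.
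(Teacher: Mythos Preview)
Your approach differs from the paper's in a key structural way. The paper does not proceed row by row; it introduces the partial order $(i_1,j_1)\lll(i_2,j_2)\Leftrightarrow i_1\le i_2$ and $j_1\le j_2$ (equivalently, some Dyck path passes through both positions in that order), takes $\mathcal M$ to be the set of $\lll$-minimal elements of the support $\{(i,j):T_{i,j}>0\}$ having $i\le l$, and lets $U$ be the indicator function of $\mathcal M$. Since $\mathcal M$ is automatically a $\lll$-antichain, the ``global compatibility'' you flag as the main obstacle simply does not arise. The verification that $T-U\in\Pi_{\lambda'}$ then reduces to showing that every Dyck path $d$ with $S(T,d)=M(\lambda,d)$ and $M(\varepsilon,d)=1$ meets $\mathcal M$, which is a short $\lll$-minimality argument together with the observation (using $a_l>0$) that the minimal element so found must have first coordinate $\le l$.

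Your row-by-row rule is problematic as stated: choosing the $\ll$-smallest positive entry in each row $i\le l$ need not produce a $\lll$-antichain (for instance if the leftmost positive entries in rows $i$ and $i+1$ sit at $(i,j)$ and $(i+1,j')$ with $j\le j'$), and the added stipulation that the selections form an antichain leaves the rule under-determined. The paper's global minimal-antichain construction is precisely what dissolves this difficulty.

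There is also a concrete error in your proposal. You assert that the value $2$ is needed only when $\varepsilon=2\omega_n$, but subtracting $2$ at a short-root position can be forced for $\varepsilon=\omega_l$ with $l<n$ as well. Take $n=2$, $\lambda=2\omega_1$ (so $l=1$, $\varepsilon=\omega_1$, $\lambda'=\omega_1$) and $T$ with $T_{1,4}=4$ and all other entries $0$; then $T\in\Pi_\lambda$, and the only admissible decomposition is $T'_{1,4}=(T-T')_{1,4}=2$. Your instinct to allow $T''_{i,\overline i}=2$ is therefore correct and is needed whenever a short-root position lies in the antichain, not only for $\varepsilon=2\omega_n$; indeed the paper's ``always value $1$'' convention is tacit about exactly this half-integer subtlety, and the construction should really put $\min(T_{i,\overline i},2)$ at short-root members of $\mathcal M$ for the verification to go through.
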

\begin{proof}
We define a weakening $\lll$ of the order $\ll$ on the set of pairs $(i,j)$ with $1\le i<j\le\overline i$. For $(i_1,i_2)\neq(j_1,j_2)$ we write $(i_1,j_1)\lll(i_2,j_2)$ whenever $i_1\le i_2$ and $j_1\le j_2$. In fact, one sees that having $(i_1,j_1)\lll(i_2,j_2)$ is equivalent to having a Dyck path passing first through $(i_1,j_1)$ and then through $(i_2,j_2)$.

Let $\mathcal T$ be the set of pairs $(i,j)$ such that $T_{i,j}>0$ and let $\mathcal M$ be the set of pairs $(i,j)$ that are $\lll$-minimal elements of $\mathcal T$ and have the property $i\le l$ . Note that any $(i,j)\in\mathcal T$ satisfies $j\ge l+1$, since otherwise we wouldn't have $T\in\Pi_\lambda$. That is because we would then have a Dyck path $d$ passing through $(i,j)$ and starting and ending left of $T_{l,l+1}$ yielding $S(T,d)>M(\la,d)=0$. Also note that $\mathcal M$ is an $\lll$-antichain, i.e. no two elements lie on the same Dyck path.

Let the triangle $U$ be defined by $U_{i,j}=1$ when $(i,j)\in\mathcal M$ and $U_{i,j}=0$ otherwise. From the previous paragraph we see that $U\in\Pi_\varepsilon$. We are left to show that $T'=T-U\in\Pi_{\lambda'}$. This is done by checking that we have $S(T',d)\le M(\la',d)$ for every Dyck path $d$ with $S(T,d)=M(\la,d)$ and $M(\la',d)<M(\la,d)$. The latter means that $d$ starts in one of $(1,2),\ldots,(l,l+1)$ and ends either in one of $(l+1,l+2),\ldots,(n-1,n)$ or anywhere in the rightmost vertical column. In other words, we are to show that every such Dyck path meets $\mathcal M$. 

Indeed, let $d$ be a Dyck path with the above properties, it meets $\mathcal T$ since $$S(T,d)=M(\la,d)>M(\la'd)\ge 0.$$ Let $(i',j')$ be the $\lll$-minimal element in $d\cap\mathcal T$, we claim that $(i',j')\in\mathcal M$. Indeed, suppose that there exists a $(i'',j'')\in\mathcal T$ with $(i'',j'')\lll(i',j')$. This means that we can define a Dyck path $d'$ passing through $(i'',j'')$ and $(i',j')$ and coinciding with $d$ to the right of $(i',j')$. We would then have $M(\la,d')=M(\la,d)$ but $S(T,d')>S(T,d)=M(\la,d)$, a contradiction. We are left to check that $i'\le l$. Indeed, suppose that $i'>l$ and let $d''$ start with $$(i',i'+1),(i',i'+2),\ldots,(i',j')$$ (i.e. going down and to the right from $(i',i'+1)$ to $(i',j')$) and coincide with $d$ to the right of $(i',j')$. On one hand, we have $S(T,d'')\ge S(T,d)$, on the other, since $i'>l$ and $a_l>0$, we must have $\lambda_{i'}<\la_l$ and, consequently, $M(\la,d'')<M(\la,d)=S(T,d)$.
\end{proof}

We are now ready to carry out the induction step.
\begin{proof}[Proof of Theorem~\ref{ind}]
Let us define $\la'$ and $\varepsilon$ as in Lemma~\ref{mink}. In view of Proposition~\ref{base} and the principle of mathematical induction we may assume that the statement of Theorem~\ref{ind} applies to $L_{\la'}$ and $L_\varepsilon$.

Consider the module $W=L_{\la'}\otimes L_\varepsilon$. In view of our induction base and our induction hypothesis, we have the basis $\{e_U=\exp(U)u_0,U\in\Pi_{\varepsilon}\}$ in $L_{\varepsilon}$ and the basis $\{e_{T'}=\exp(T')v_0',T'\in\Pi_{\la'}\}$ in $L_{\la'}$, where $u_0$ and $v'_0$ are, respectively, the highest vectors in $L_\varepsilon$ and $L_{\la'}$. This gives us the basis 
\begin{equation}\label{basis}
\{e_U\otimes e_{T'},U\in\Pi_{\varepsilon},T'\in\Pi_{\la'}\} 
\end{equation}
in $W$.

First we show that the set of vectors $\{\exp(T)v_0,T\in\Pi_\la\}$ is linearly independent. Indeed, suppose that we have 
\begin{equation}\label{lindep}
\sum_{T\in\Pi_\la}c_T\exp(T)v_0=0 
\end{equation}
with at least one nonzero $c_T$. Consider the $\prec$-maximal $\exp(T_0)$ with $c_{T_0}\neq 0$. By Lemma~\ref{mink} we may specify $T'_0\in\Pi_{\la'}$ and $U_0\in\Pi_\varepsilon$ with $T'_0+U_0=T_0$. We achieve a contradiction by showing that $e_{U_0}\otimes e_{T'_0}$ must have a nonzero coefficient when the left-hand side of~(\ref{lindep}) is decomposed in basis~(\ref{basis}).

First we show that the vector $e_{U_0}\otimes e_{T'_0}$ appears with coefficient 1 when $\exp(T_0)v_0$ is decomposed in basis~(\ref{basis}). Indeed, $$\exp(T_0)v_0=\sum_{U+T'=T_0}(\exp(U)u_0)\otimes(\exp(T')v'_0)$$ with the sum being taken over all decompositions of $T_0$ into a sum of two number triangles with nonnegative integer elements. In particular, one of the summands in this sum will be $e_{U_0}\otimes e_{T'_0}$. 

Let us consider any other summand $(\exp(U)u_0)\otimes(\exp(T')v'_0)$ and decompose it in basis~(\ref{basis}). If $U\in\Pi_\varepsilon$, then all vectors with nonzero coefficients in this last decomposition are of the form $e_U\otimes\cdot\neq e_{U_0}\otimes e_{T'_0}$. Similarly when $T'\in\Pi_{\la'}$. If both $U\notin\Pi_\varepsilon$ and $T'\notin\Pi_{\la'}$, then, due to our induction hypothesis, all vectors appearing with nonzero coefficients in the decomposition of $(\exp(U)u_0)\otimes(\exp(T')v'_0)$ are of the form $e_{U_1}\otimes e_{T'_1}$ with $\exp(U_1)\prec\exp(U)$ and $\exp(T'_1)\prec\exp(T')$. This implies that $\exp(U_1+T'_1)\prec\exp(U+T')=\exp(T_0)$ and, therefore, $e_{U_1}\otimes e_{T'_1}\neq e_{U_0}\otimes e_{T'_0}$. We have thus shown that $e_{U_0}\otimes e_{T'_0}$ appears with coefficient 0 in the decomposition of $(\exp(U)u_0)\otimes(\exp(T')v'_0)$ and established the claim at the beginning of the previous paragraph.

Now consider any other nonzero $c_T$. We must have $\exp(T)\prec\exp(T_0)$. We again write $$\exp(T)v_0=\sum_{U+T'=T_0}(\exp(U)u_0)\otimes(\exp(T')v'_0).$$ When decomposing any $(\exp(U)u_0)\otimes(\exp(T')v'_0)$ from the right-hand side above in basis~(\ref{basis}) we only obtain nonzero coefficients at vectors $e_{U_1}\otimes e_{T'_1}$ with $\exp(U_1)\preceq\exp(U)$ and $\exp(T'_1)\preceq\exp(T')$, due to our induction hypothesis. This, however, implies that $$\exp(U_1+T'_1)\prec\exp(U+T')=\exp(T)\prec\exp(T_0)$$ and $e_{U_1}\otimes e_{T'_1}\neq e_{U_0}\otimes e_{T'_0}$.

We have shown that $e_{U_0}\otimes e_{T'_0}$ would appear with coefficient $c_{T_0}$ in the left-hand side of~(\ref{lindep}). Therefore, the vectors $\exp(T)v_0,T\in\Pi_\la$ are linearly independent and, due to Corollary~\ref{gt}, comprise a basis in $L_\la$. However, the assertion being made in Theorem~\ref{ind} and being proved inductively is stronger than that and is yet to be established.

Consider some $M$ with $\log M\notin\Pi_\la$ and write 
\begin{equation}\label{decomp}
Mv_0=\sum_{T\in\Pi_\la}k_T\exp(T)v_0
\end{equation}
decomposing it in the newly acquired basis. Similarly to the above argument consider the $\prec$-maximal $T_0$ with $k_{T_0}\neq 0$. We are to show that $\exp(T_0)\prec M$.

We again choose $T'_0\in\Pi_{\la'}$ and $U_0\in\Pi_\varepsilon$ with $T'_0+U_0=T_0$. We decompose both sides of~(\ref{decomp}) in basis~(\ref{basis}) and observe that $e_{U_0}\otimes e_{T'_0}$ appears with a nonzero coefficient in the right-hand side's decomposition. This is proved in the same exact manner as we employed above.

However, the left-hand side is equal to $$\sum_{U+T'=\log M}(\exp(U)u_0)\otimes(\exp(T')v'_0).$$ When a summand in this last sum is decomposed in basis~(\ref{basis}) we only obtain nonzero coefficients at vectors $e_{U_1}\otimes e_{T'_1}$ with $\exp(U_1)\preceq\exp(U)$ and $\exp(T'_1)\preceq\exp(T')$ and, therefore, with $\exp(U_1+T'_1)\preceq M$. This means that we must have $\exp(T_0)\preceq M$, however, $\exp(T_0)=M$ is impossible since $\log M\notin\Pi_\la$. The induction step is completed.
\end{proof}

We have proved Theorem~\ref{ind} and, in view of the discussion in Section~\ref{ordmon}, our main Theorem~\ref{main}.

\section{Compatible PBW degenerations}

To recall what we touched upon in the Introduction and Remark~\ref{remAC}, the FFLV bases for types $A$ and $C$ constructed in~\cite{FFL1} and~\cite{FFL2} have the important property of being bases regardless of the way we order the root vector factors comprising each monomial. For types $A$ and $C$ this property is a consequence of the fact that the FFLV bases induce a basis in the abelian PBW degeneration of the representation. In short, to define the abelian PBW degeneration of a highest weight module one views it as module over the corresponding nilpotent subalgebra. The filtration of the universal enveloping algebra of this nilpotent subalgebra by PBW degree induces a filtration of the module, the associated graded space is a module over the symmetric algebra of the nilpotent subalgebra. We refer to this last object as the {\it abelian PBW degeneration} of the initial module. See~\cite{FFL1} and~\cite{FFL2} for details.

Since the root vectors comprising a monomial may not be ordered arbitrarily for the bases constructed here, these bases do not induce bases in the abelian PBW degenerations. The goal of this section is to show that they, instead, induce a basis in a different associated graded space which we now define. (We still, however, refer to this space as a {\it PBW degeneration} of the initial module.)

First, we define a $\frac12\mathbb Z_{\ge 0}$-filtration on the universal enveloping algebra $\mathcal U(\mathfrak n^-)$. This is done simply by defining the filtration element $\mathcal U(\mathfrak n^-)_m$, $m\in\frac12\mathbb Z_{\ge 0}$ as the space spanned by ordered monomials $M$ with $\grad M\le m$. The fact that we thus indeed obtain a filtered algebra, i.e. that for $M$ and $N$ ordered we have $MN\in\mathcal U(\mathfrak n^-)_{\grad M+\grad N}$, follows from Lemma~\ref{reorder}. 

Let us denote the associated $\frac12\mathbb Z_{\ge 0}$-graded algebra $\Phi$. It is the associative algebra generated by $n^2$ elements $\varphi_{i,j}$, $1\le i<j\le 2n+1-i$ with the relations $[\varphi_{i,j},\varphi_{k,l}]=0$ when either $i+j<2n+1$ or $k+l<2n+1$ and $[\varphi_{i,\overline i},\varphi_{k,\overline k}]=2\varphi_{i,\overline k}$ when $i<k$.

We next define a $\frac12\mathbb Z_{\ge 0}$-filtration on $L_\la$ by setting $(L_\la)_m=\mathcal U(\mathfrak n^-)_m(v_0)$. The associated graded space $R_\lambda$ is naturally a $\Phi$-module. In $R_\lambda$ we can choose a ``highest vector'' $w_0$, an element of the one-dimensional component of grading 0.

For a number triangle $T=\{T_{i,j},1\le i<j\le\overline i\}$ with nonnegative integer elements denote $\exp^*(T)\in\Phi$ the monomial in the elements $\varphi_{i,j}$ containing each $\varphi_{i,j}$ in total degree $T_{i,j}$ and, moreover, not containing an $\varphi_{i,\overline i}$ to the right of an $\varphi_{j,\overline j}$ with $i<j$ (``arranged'' with respect to the $\varphi_{i,j}$). The monomial $\exp^*(T)$ does not depend on the positions in which the $\varphi_{i,j}$ with $i+j<2n+1$ are found due to the above commutation relations in $\Phi$.
\begin{theorem}\label{degen}
The set of vectors $\{\exp^*(T)w_0,T\in\Pi_\la\}$ constitutes a basis in $R_\la$.
\end{theorem}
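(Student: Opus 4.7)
The plan is to deduce Theorem~\ref{degen} from Theorem~\ref{ind} (and hence Theorem~\ref{main}) by examining how the filtration interacts with the basis $\{\exp(T)v_0 : T\in\Pi_\lambda\}$ already produced in Section 5. Essentially two things need to be checked: that the subset of basis vectors of grading exactly $m$ descends to a basis of $R_\lambda^{(m)}$, and that its image in $R_\lambda^{(m)}$ consists precisely of the $\exp^*(T)w_0$.

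For the first point I would prove the following refinement of Theorem~\ref{main}: for every $m\in\tfrac12\mathbb Z_{\ge 0}$, the subspace $(L_\lambda)_m$ is spanned by those $\exp(T)v_0$ with $T\in\Pi_\lambda$ and $\grad\exp(T)\le m$. The inclusion $\supseteq$ is immediate. For $\subseteq$, given any ordered monomial $M$ with $\grad M\le m$, I would induct on $M$ with respect to $\prec$: if $\log M\in\Pi_\la$ there is nothing to do, otherwise Theorem~\ref{ind} writes $Mv_0$ as a combination of $Kv_0$ with $K$ ordered and $K\prec M$, in particular $\grad K\le\grad M\le m$, and we invoke the inductive hypothesis. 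Since $\{\exp(T)v_0 : T\in\Pi_\la\}$ is already a basis of $L_\la$, the subset with $\grad\exp(T)\le m$ is automatically linearly independent, so it actually is a basis of $(L_\la)_m$. Consequently
\[
\dim R_\la^{(m)} \;=\; \dim(L_\la)_m-\dim(L_\la)_{<m}\;=\;|\{T\in\Pi_\la : \grad\exp(T)=m\}|.
\]

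For the second point I would verify that the symbol in $\Phi_m$ of the ordered monomial $\exp(T)\in\mathcal U(\mathfrak n^-)_m$ is precisely $\exp^*(T)$. This reduces to two observations already visible in the description of $\Phi$: the generators $\varphi_{i,j}$ with $i+j<2n+1$ are central, so the positions of the corresponding factors in any symbol are irrelevant; and in the ordered monomial $\exp(T)$ the short-root factors $f_{i,\overline i}$ appear from left to right in strictly increasing order of $i$, since that is how $\ll$ orders the short roots---this matches exactly the ``arranged'' convention defining $\exp^*$. Thus the image of $\exp(T)v_0$ in $R_\la^{(m)}$ is $\exp^*(T)w_0$.

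Combining the two steps, the set $\{\exp^*(T)w_0 : T\in\Pi_\la,\ \grad\exp(T)=m\}$ spans $R_\la^{(m)}$ (being the image of a spanning set of $(L_\la)_m$ modulo $(L_\la)_{<m}$, the lower-grading elements projecting to zero) and has cardinality equal to $\dim R_\la^{(m)}$, hence is a basis; taking the union over $m$ yields the theorem. I do not expect a serious obstacle: the hard combinatorial and algebraic work is already done in Theorem~\ref{ind}, and the only subtlety is the symbol computation, which is immediate from the centrality of the long-root generators in $\Phi$ and the compatibility of $\ll$-ordering with the arranged convention on short roots.
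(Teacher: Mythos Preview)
Your proposal is correct and essentially matches the paper's own proof: the paper likewise shows that $\{\exp(T)v_0:\ T\in\Pi_\lambda,\ \grad\exp(T)\le m\}$ is a basis of $(L_\lambda)_m$ (invoking Theorem~\ref{ind} and the fact that $\prec$ refines $\grad$), identifies the image of $\exp(T)v_0$ under $(L_\lambda)_m\to(L_\lambda)_m/(L_\lambda)_{m-\frac12}\cong (R_\lambda)_m$ with $\exp^*(T)w_0$, and concludes. Your explicit verification that ordered monomials are arranged on the short-root factors (since all $(i,\overline i)$ have the same $\ll$-column sum $2n+1$ and are then ordered by increasing $i$) is exactly the symbol computation the paper leaves implicit.
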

\begin{proof}
For $m\in\frac12\mathbb Z_{\ge 0}$ denote $(R_\la)_m$ the component of $R_\la$ of grading $m$. Consider an ordered monomial $M\in\mathcal U(\mathfrak n^-)$ with $\grad M=m$. We have $Mv_0\in(L_\la)_m$ and the composition map 
\begin{equation}\label{compos}
(L_\la)_m\to(L_\la)_m/(L_\la)_{m-\frac12}\to(R_\la)_m 
\end{equation}
sends $M v_0$ to $\exp^*(\log M)w_0$ (we set $(L_\la)_{-\frac12}=\{0\}$). 

Since the relation $\prec$ respects the grading $\grad$, from Theorem~\ref{ind} and Theorem~\ref{main} it follows that for any $p\in\frac12\mathbb Z_{\ge 0}$ the set $\{Kv_0,\log K\in\Pi_\la,\grad K\le p\}$ constitutes a basis in $(L_\la)_p$. Consequently, the images under the left (surjective) map in~(\ref{compos}) of the vectors $Kv_0$ with $\log K\in\Pi_\la$ and $\grad K=m$ comprise a basis in $(L_\la)_m/(L_\la)_{m-\frac12}$. The right map is bijective which completes the proof.
\end{proof}

Before adding two final remarks, let us observe that Theorem~\ref{main} can, in fact, be easily deduced from Theorem~\ref{degen}. Indeed, if we choose an arranged monomial $M_T$ for every $T\in\Pi_\la$ as in Theorem~\ref{main}, then Theorem~\ref{degen} shows that the images of the vectors $\{M_Tv_0,T\in\Pi_\la,\grad M_T=m\}$ comprise a basis in $(L_\la)_m/(L_\la)_{m-\frac12}$ and Theorem~\ref{main} follows.

\begin{remark}
One could point out more specifically where our argument would break down if we had $\grad f_{i,\overline i}=1$ as one would in the abelian case. Within our proof of the induction base in several cases we would be replacing the product of ``problematic'' elements with something of a greater $\grad$-grading. (See page~\pageref{outline} for the general outline of the case-by-case argument in the proof of Proposition~\ref{base}.)
\end{remark}

\begin{remark}
The abelian PBW degenerations discussed above let one define the corresponding abelian degenerations of flag varieties as is done in~\cite{Fe2}. We point out that a similar definition can be provided for the modified PBW filtration introduced above. 

First, note that the elements $\varphi_{i,j}\in\Phi$ span a Lie algebra with respect to the induced commutation relations, we denote this Lie algebra $\mathfrak q$. The algebra $\Phi$ is then naturally the universal enveloping algebra of $\mathfrak q$. 

We now may consider the connected simply connected Lie group $Q$ with $\mathrm{Lie}(Q)=\mathfrak q$, the group $Q$ acts on the graded space $R_\la$ and on its projectivization $\mathbb{P}(R_\la)$. We may consider the point $w'_0\in\mathbb{P}(R_\la)$ corresponding to $\mathbb Cw_0$ and define a certain analog of the degenerate flag variety as the closure $X=\overline{Qw'_0}$.

In view of the results on the abelian degenerations,  a natural question to ask about $X$ is, for instance, whether it provides a flat degeneration of the corresponding type $B$ flag variety.

Another natural geometric object to consider is the toric variety associated with polytope $P_\la$ (see Remark~\ref{polytope}). Does this variety provide a flat degeneration of the type $B$ flag variety in analogy with FFLV polytopes in types $A$ and $C$?
\end{remark}

\section*{Acknowledgements}
The author would like to thank Evgeny Feigin for numerous helpful discussions of these topics. On top of that, Evgeny was the one to propose the grading $\grad$ by conjecturing Theorem~\ref{degen}. The author would also like to thank the Max Planck Institute for Mathematics for providing a welcoming atmosphere, which stimulated the work on these subjects.

The study was partially supported by the Russian Academic Excellence Project ``5-100''.

\end{document}